\theoremstyle{plain}
\newtheorem{theorem}{Theorem}[section]
\newtheorem*{MainTheorem}{Theorem \ref{Main Theorem} (rephrased)}
\newtheorem*{theorem*}{Theorem}
\newtheorem{corollary}[theorem]{Corollary}
\newtheorem{lemma}[theorem]{Lemma}
\theoremstyle{definition}
\newtheorem{remark}[theorem]{Remark}
\theoremstyle{definition}
\newcommand{\R}{{\mathbb R}}
\newcommand{\Z}{\mathbb Z}
\newcommand{\N}{\mathbb N}
\newcommand{\Q}{\mathbb Q}
\newcommand{\defn}[1]{\emph{#1}}
\newcommand{\boundary}{\partial}
\newcommand{\mc}[1]{\mathcal{#1}}
\newcommand{\inter}[1]{\mathring{#1}} 
\newcommand{\co}{\mskip0.5mu\colon\thinspace}
\newcommand{\spacing}{ \parskip 6.6pt \parindent 0pt}
\begin{document}


   \title[]{New examples of Brunnian theta graphs}
   \author{Byoungwook Jang}
   \author{Anna Kronaeur}
   \author{Pratap Luitel}
   \author{Daniel Medici}
   \author{Scott A. Taylor}
   \author{Alexander Zupan}

  \begin{abstract}
The Kinoshita graph is the most famous example of a Brunnian theta graph, a nontrivial spatial theta graph with the property that removing any edge yields an unknot.  We produce a new family of diagrams of spatial  theta graphs with the property that removing any edge results in the unknot. The family is parameterized by a certain subgroup of the pure braid group on four strands. We prove that infinitely many of these diagrams give rise to distinct Brunnian theta graphs.
    \end{abstract}

   \date{\today}


   \maketitle

\section{Introduction}

A \defn{spatial theta graph} is a theta graph (two vertices and three edges, each joining the two vertices) embedded in the 3-sphere $S^3$. There is a rich theory of spatial theta graphs and they show up naturally in knot theory. (For instance, the union of a tunnel number 1 knot with a tunnel having distinct endpoints is a spatial theta graph.) A \defn{trivial spatial theta graph} is any spatial theta graph which is isotopic into a 2-sphere in $S^3$. A spatial theta graph $G \subset S^3$ has the \defn{Brunnian property} if for each edge $e \subset G$ the knot $K_e = G\setminus e$ which is the result of removing the interior of $e$ from $G$ is the unknot. A spatial theta graph  is \defn{Brunnian} (or \emph{almost unknotted} or \emph{minimally knotted}) if it is non-trivial and has the Brunnian property.

By far the best known Brunnian theta graph is the Kinoshita graph \cite{Kinoshita1, Kinoshita2}. The Kinoshita graph was generalized by Wolcott \cite{Wolcott} to a family of Brunnian theta graphs now called the Kinoshita-Wolcott graphs. They are pictured in Figure \ref{Kinoshita-Wolcott}. Inspection shows that they have the Brunnian property. There are several approaches to showing that the Kinoshita graph (and perhaps all of the Kinoshita-Wolcott graphs) are non-trivial: Wolcott \cite{Wolcott} uses double branched covers; Litherland \cite{Litherland} uses a version of the Alexander polynomial; Scharlemann \cite{Scharlemann} and Livingston \cite{Livingston} use representations of certain associated groups; McAtee, Silver, and Williams \cite{MSW} use quandles; Thurston \cite{Thurston} showed that the Kinoshita graph is hyperbolic (i.e. the exterior supports a complete hyperbolic structure with totally geodesic boundary.) 

\begin{figure}[ht]
\labellist \small\hair 2pt
\pinlabel $-i$ at 223 433
\pinlabel $-j$ at 83 137
\pinlabel $-k$ at 380 140
\endlabellist

\includegraphics[scale=.5]{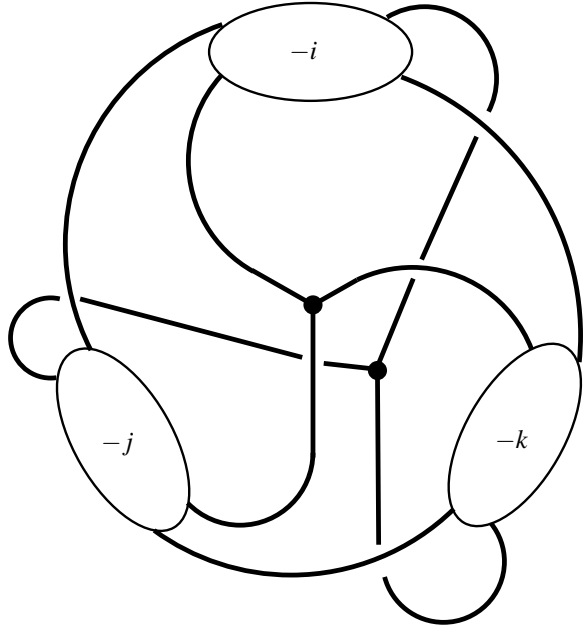}
\caption{The Kinoshita-Wolcott graphs (figure based on \cite[Figure 4]{Litherland}). The labels $-i$, $-j$, and $-k$ indicate the number of full twists in each box (with the sign of $-i$, $-j$, $-k$ indicating the direction of the twisting.) If $i=j=k=1$, the graph is the Kinoshita graph. If one of $i,j,k$ is zero, then the graph is trivial; otherwise, it is Brunnian \cite[Theorem 2.1]{Wolcott}.}
\label{Kinoshita-Wolcott}
\end{figure}

In this paper, we produce an infinite family of diagrams for spatial theta graphs $G(A,t_1,t_2)$ having the Brunnian property. These graphs depend on braids $A$ lying in a certain subgroup of the pure braid group on 4 strands and on integers $t_1, t_2$ which represent certain twisting parameters. Our main theorem shows that infinitely many braids $A$ give rise to Brunnian theta graphs.

\begin{MainTheorem}
For all $n \in \Z$, there exists a braid $A_n$ such that for all $m \in \Z$, the graph $\Gamma(n,m) = G(A_n, -n, m)$ is a Brunnian theta graph. Furthermore, suppose that for a given $(n,m) \in \Z \times \Z$, the set $S(n,m) \subset \Z\times \Z$ has the property that if $(a,b) \in S(n,m)$ then $\Gamma(a,b)$ is isotopic to $\Gamma(n,m)$ and if $(a,b), (a',b') \in S(n,m)$ are distinct, then $a + b \neq a' + b'$. Then $S(n,m)$ has at most three distinct elements. In particular, there exist infinitely many $n \in \Z$ such that the graphs $\Gamma(n,0)$ are pairwise non-isotopic Brunnian theta graphs.
\end{MainTheorem}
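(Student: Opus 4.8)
The plan is to proceed in four stages: make the construction of $G(A,t_1,t_2)$ explicit, verify the Brunnian property by a diagram chase, introduce a computable isotopy invariant that is sensitive to the twisting parameters, and carry out the bookkeeping that yields both the counting bound and the infinitude of non-isotopic examples. For the construction I would present $G(A,t_1,t_2)$ as a suitable closure of a $4$-strand tangle assembled from the pure braid $A$ together with two twist regions carrying $t_1$ and $t_2$ full twists, with the three edges of the theta graph routed through prescribed portions of the diagram. The first lemma to establish is that $G(A,t_1,t_2)$ always has the Brunnian property: for each edge $e$, deleting the interior of $e$ leaves two strands that run parallel through (or cancel within) each twist box, so that $A$ and both twist regions unwind by Reidemeister moves and $K_e$ is the unknot. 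This is the ``by inspection'' step; the subgroup of the pure braid group named in the statement is exactly the set of $A$ for which this cancellation succeeds for all three edges simultaneously, which is what makes the whole family have the Brunnian property.

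To detect non-triviality and to compare members of the family I would use a two-variable Alexander-type invariant of theta-curves in the spirit of Litherland \cite{Litherland}. Since the exterior of a spatial theta graph has $H_1\cong\Z^2$ with its three meridians summing to zero, this invariant is naturally a Laurent polynomial $\Delta_G(t_1,t_2,t_3)$ subject to $t_1t_2t_3=1$, well-defined up to multiplication by units $\pm t_1^{a}t_2^{b}t_3^{c}$ and up to the $S_3$ permuting $t_1,t_2,t_3$ (an ambient isotopy need not respect any labelling of the edges, so only the unordered data is an invariant), and $\Delta_G$ is a unit when $G$ is \emph{trivial}. I would compute $\Delta_{G(A,t_1,t_2)}$ from a Wirtinger-type presentation read off the tangle: the braid $A$ contributes a fixed matrix, independent of $t_1,t_2$, through a reduced Burau-type representation, while each twist region shifts exponents. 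The outcome to aim for is a formula showing that, after the unit normalization, the exponent-span of $\Delta_{\Gamma(n,m)}=\Delta_{G(A_n,-n,m)}$ in one distinguished variable, say $t_3$, equals a strictly monotone --- indeed affine --- function $h(n+m)$ of $n+m$ (the sign convention $t_1=-n$ in the definition of $\Gamma$ is what makes the answer come out as $n+m$), whereas its spans in $t_1$ and $t_2$ are bounded in terms of $n$ alone. Choosing $A_n$ so that $\Delta_{\Gamma(n,m)}$ is a nonunit for every $m$ --- which follows once the $t_3$-span is nonzero, together with an auxiliary argument on the $A_n$-part of the polynomial for the remaining parameters --- then proves that every $\Gamma(n,m)$ is non-trivial, and hence Brunnian, which is the first assertion of the theorem.

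For the counting statement, let $(a,b),(a',b')\in S(n,m)$. Since $\Gamma(a,b)$ is isotopic to $\Gamma(n,m)$, the unordered triple of exponent-spans $\{\sigma_1,\sigma_2,\sigma_3\}$ of $\Delta$ is the same for both graphs; in particular $h(a+b)$, being one of the three spans of $\Delta_{\Gamma(a,b)}$, equals one of the three spans of $\Delta_{\Gamma(n,m)}$, a set of at most three numbers, and since $h$ is injective, $a+b$ lies in a set of at most three values determined by $(n,m)$. Hence any subset of $S(n,m)$ on which $(a,b)\mapsto a+b$ is injective has at most three elements, which is the asserted bound. Finally, apply this with $m=0$ and $S(n,0)=\{(n',0):\Gamma(n',0)\text{ is isotopic to }\Gamma(n,0)\}$, a set on which $(a,b)\mapsto a+b=a$ is automatically injective: each isotopy class is therefore represented by at most three of the graphs $\Gamma(n,0)$, so the relation ``isotopic'' on $\{\Gamma(n,0):n\in\Z\}$ has only finite classes, there are infinitely many classes, and picking one $n$ from each of infinitely many of them gives an infinite family of pairwise non-isotopic Brunnian theta graphs.

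The step I expect to be the main obstacle is the explicit computation of $\Delta_{G(A,t_1,t_2)}$, and in particular controlling how the contribution of the pure braid $A$ interacts with the twist boxes. One needs the subgroup of the pure braid group chosen delicately enough that simultaneously (i) the Brunnian diagram chase goes through for all three edges and (ii) the $A$-contribution does not disturb the leading behaviour of $\Delta$ in $t_3$, so that its $t_3$-span is exactly the clean affine function $h(n+m)$ rather than something that only yields a weaker bound; reconciling (i) and (ii), and verifying that the resulting $\Gamma(n,m)$ are genuinely non-trivial rather than merely having the Brunnian property, is the technical heart of the argument.
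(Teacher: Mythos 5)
There is a genuine gap: the entire burden of the theorem --- non-triviality of $\Gamma(n,m)$ and the bound $|S(n,m)|\le 3$ --- is placed on an explicit computation of a Litherland-type Alexander polynomial $\Delta_{G(A_n,-n,m)}$ that you never carry out. You only describe ``the outcome to aim for'' (a non-unit polynomial whose exponent-span in one variable is an affine, injective function of $n+m$, with controlled spans in the other variables), and you yourself flag this computation as the main obstacle. Nothing in the proposal makes it plausible that the span has this clean form: the braid $A_n=P_{23}^{-n}P_{13}$ varies with $n$ at the same time as the twist parameter $t_1=-n$ does, so the ``$A$-contribution'' is not a fixed matrix across the family, and the interaction you defer is exactly the point at issue. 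Indeed, the paper explicitly lists as an open question whether Litherland's Alexander polynomial (or any algebraic invariant) can certify non-triviality for infinitely many of these graphs, so the step you are leaning on is not a routine verification but an unresolved problem. The Brunnian-property diagram chase and the final counting bookkeeping (spans are an unordered isotopy invariant, hence at most three admissible values of $a+b$; each class among the $\Gamma(n,0)$ has at most three members, so there are infinitely many classes) are fine, but they are the easy parts.

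For contrast, the paper avoids polynomial invariants entirely. Non-triviality is proved by \emph{unzipping} the red edge: unzipping any edge of the trivial theta graph can only yield an unknot or a $(2,k)$ torus knot (a rational-tangle/defining-disc argument), whereas unzipping $\Gamma(n,m)$ yields the composite knot $3_1\,\#\,\mc{K}(-\tfrac{3}{6(m+n)+5})$, which by primeness of torus knots and Schubert's classification is never of that form. The bound of three is obtained by first showing (via the Montesinos trick: the double branched cover over the complementary tangle is a strongly invertible knot exterior, and two distinct reducible surgeries would violate Eudave-Mu\~noz's theorem, i.e.\ the Cabling Conjecture for strongly invertible knots) that no isotopy $\Gamma(a',b')\to\Gamma(a,b)$ with $a+b\neq a'+b'$ can carry red edge to red edge, and then by a short permutation argument on the three edges. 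Your proposal would be an interesting alternative route if the polynomial computation could be completed, but as written the central claims are conjectural, so the proof is not established.
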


\subsection{Acknowledgements}
We thank the attendees at the 2013 Spatial Graphs conference for helpful discussions, particularly Erica Flapan and Danielle O'Donnol. We are also grateful to Ryan Blair,  Ilya Kofman, Jessica Purcell, and Maggy Tomova for helpful conversations. This research was partially funded by Colby College. 

\section{Notation}
We work in either the PL or smooth category. For a topological space $X$, we let $|X|$ denote the number of components of $X$. If $Y \subset X$ then $\eta(Y)$ is a closed regular neighborhood of $Y$ in $X$ and $\inter{\eta}(Y)$ is an open regular neighborhood. More generally, $\inter{Y}$ denotes the interior of $Y$. 

\section{Constructing new Brunnian theta graphs}

There are two natural methods for constructing new Brunnian theta graphs: vertex sums and clasping.

\subsection{Vertex sums}

Suppose that $G_1 \subset S^3 $ and $G_2 \subset S^3$ are spatial theta graphs. Let $v_1 \in G_1$ and $v_2 \in G_2$ be vertices. We can construct a new spatial theta graph $G_1 \#_3 G_2 \subset S^3$ by taking the connected sum of $S^3$ with $S^3$ by removing regular open neighborhoods of $v_1$ and $v_2$ and gluing the resulting 3-balls $B_1$ and $B_2$ together by a homeomorphism $\boundary B_1 \to \boundary B_2$ taking the punctures $G_1 \cap \boundary B_1$ to the punctures $G_2 \cap \boundary B_2$. See Figure \ref{Vertex Sum}. The subscripted 3 represents the fact that we are performing the connected sum along a trivalent vertex and is used to distinguish the vertex sum from the connected sum of graphs occuring along edges of a graph (which, when both $G_1$ and $G_2$ are theta graphs, does not produce a theta graph.)

An orientation on a spatial theta graph is a choice of one vertex to be the \defn{source}, one vertex to be the \defn{sink}, and a choice of a total order on the edges of the graph. If $G_1$ and $G_2$ are oriented theta graphs, we insist that the connected sum produce an oriented theta graph (so that the sink vertex of $G_1$ is glued to the source vertex of $G_2$ and so that the edges of $G_1 \#_3 G_2$ can be given an ordering which restricts to the given orderings on the edges of $G_1$ and $G_2$.  Wolcott \cite{Wolcott} showed that  vertex sum of oriented theta graphs is independent (up to ambient isotopy of the graph) of the choice of homeomorphism $\boundary B_1 \to \boundary B_2$. 

\begin{figure}[ht]
\includegraphics[scale=.5]{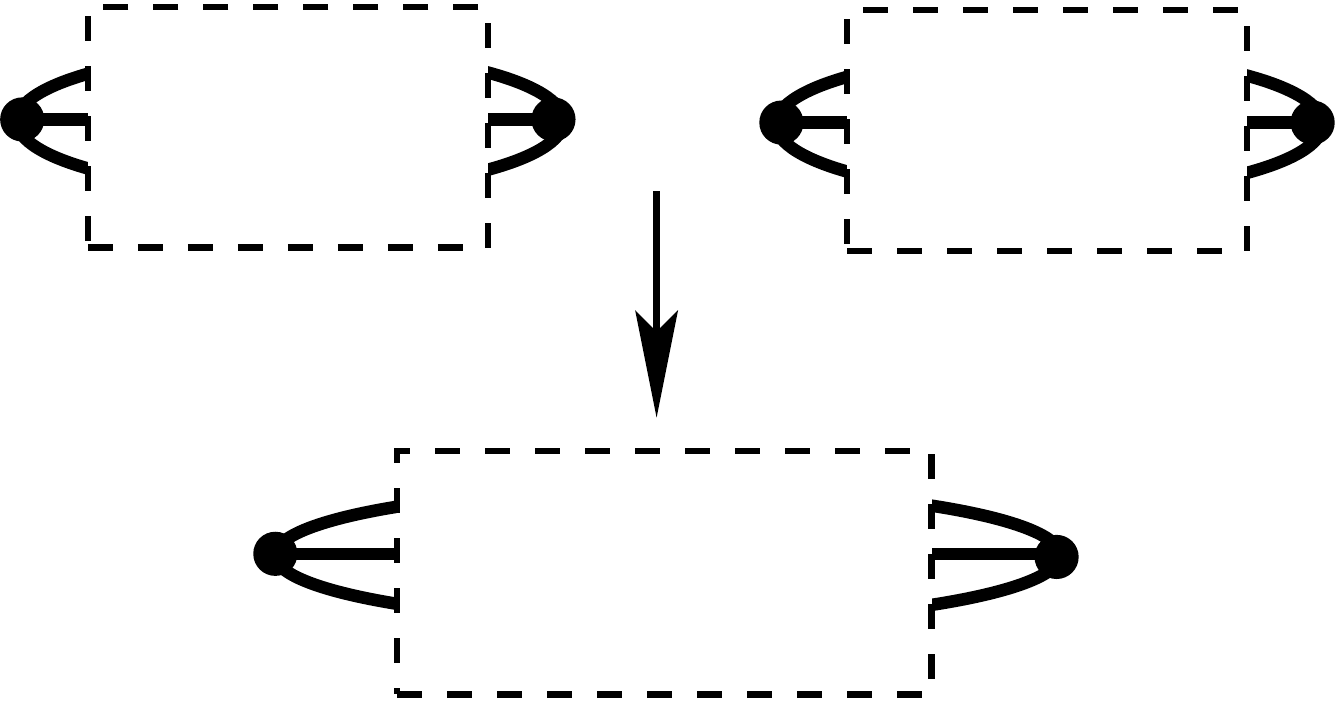}
\caption{A schematic depiction of the vertex sum of two spatial theta graphs}
\label{Vertex Sum}
\end{figure}

If $G_1$ and $G_2$ both have the Brunnian property, then $G_1\#_3 G_2$ does as well since the connected sum of two knots is the unknot if and only if both of the original knots are unknots. If $G_1$ (say) is trivial, then $G_1 \#_3 G_2$ is isotopic to $G_2$. Similarly, if at least one of $G_1$ or $G_2$ is non-trivial then $G_1 \#_3 G_2$ is non-trivial \cite{Wolcott}. Consequently:

\begin{theorem}[Wolcott]
If $G_1$ and $G_2$ are Brunnian theta graphs, then $G_1 \#_3 G_2$ is a Brunnian theta graph.
\end{theorem}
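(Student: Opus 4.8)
The plan is simply to verify the two clauses in the definition of ``Brunnian'' for $G_1 \#_3 G_2$: that it has the Brunnian property, and that it is non-trivial. Both facts were isolated in the discussion immediately preceding the statement, so the proof amounts to organizing them; I record here how I would carry that out, and where the genuine content lies.

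First I would fix notation for the edges. Write the vertices of $G_i$ as a source $a_i$ and sink $b_i$, and its edges as $e_i^1, e_i^2, e_i^3$, ordered compatibly with the orientation, and form $G_1 \#_3 G_2$ by removing $\inter{\eta}(b_1)$ and $\inter{\eta}(a_2)$ and gluing the two resulting balls along their boundary $2$-spheres by a homeomorphism matching the puncture of $e_1^j$ with that of $e_2^j$. The vertices of $G_1 \#_3 G_2$ are then $a_1$ and $b_2$, and each edge $\hat e^j$ is the concatenation $e_1^j \cup e_2^j$. Removing $\hat e^i$ leaves the knot $\hat e^j \cup \hat e^k$ (where $\{i,j,k\} = \{1,2,3\}$), which meets the gluing sphere in exactly two points, and so is the connected sum $K_{e_1^i} \# K_{e_2^i}$ of the corresponding constituent knots of $G_1$ and $G_2$. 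Since $G_1$ and $G_2$ are Brunnian, each of $K_{e_1^i}$ and $K_{e_2^i}$ is unknotted, and since a connected sum of knots is unknotted precisely when each summand is (e.g.\ by additivity of genus, or by Schubert's theorem on prime decompositions), $K_{\hat e^i}$ is unknotted for every $i$. Hence $G_1 \#_3 G_2$ has the Brunnian property.

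It remains to see that $G_1 \#_3 G_2$ is non-trivial. Here I would invoke the fact, recorded above from Wolcott, that a vertex sum is non-trivial as soon as at least one summand is non-trivial; since a Brunnian theta graph is by definition non-trivial, both $G_1$ and $G_2$ qualify, and we conclude. Combined with the previous paragraph, this shows $G_1 \#_3 G_2$ is Brunnian.

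The one step carrying real content is the non-triviality of the vertex sum, which I am taking as given. In a self-contained treatment this is the main obstacle: one would use a characterization of trivial theta graphs in terms of their exteriors (for instance, that the exterior is a genus-two handlebody), observe that the exterior of $G_1 \#_3 G_2$ is obtained by gluing the exteriors of $G_1$ and $G_2$ along a pair of pants whose three boundary curves form a meridian system, and then run a Haken-style argument on essential surfaces — or a van Kampen argument for the resulting amalgamated free product — to force each of the two pieces to be a handlebody, hence each $G_i$ to be trivial. Controlling how essential surfaces (or the amalgam) behave under this gluing is the delicate point, and it is precisely what Wolcott's cited argument handles.
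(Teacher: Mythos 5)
Your proposal is correct and follows essentially the same route as the paper: the paper also observes that removing an edge of $G_1 \#_3 G_2$ yields the connected sum of the corresponding constituent knots (unknotted iff both summands are), and likewise cites Wolcott for the fact that the vertex sum is non-trivial whenever a summand is. Your extra sketch of how one might prove that non-triviality fact goes beyond what the paper records, but both treatments ultimately take it as given from Wolcott.
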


We say that a spatial theta graph is \defn{vertex-prime} if it is not the vertex sum of two other non-trivial spatial theta graphs. The Kinoshita graph is vertex prime \cite{Calcut}. Using Thurston's hyperbolization theorem for Haken manifolds, it is possible to show that if $G_1$ and $G_2$ are theta graphs, then $G_1 \#_3 G_2$ is hyperbolic if and only if $G_1$ and $G_2$ are hyperbolic.

\subsection{Clasping}

Clasping \cite{SW} is a second method for converting a Brunnian theta graph into another theta graph with the Brunnian property. To explain it, suppose that $G$ is a spatial theta graph in $S^3$ which has been isotoped so that its intersection with a 3-ball $B \subset S^3$ consists of four unknotted arcs (as on the left of Figure \ref{Clasp Move}), numbered $\alpha_1,\alpha_2, \alpha_3, \alpha_4, \alpha_5$. Assume that the first arc and the last two arcs belong to the same edge (the ``red edge'') of $G$ and that the others belong to different, distinct edges of $G$ (the ``green edge'' and the ``blue edge''). We require that as we traverse the red edge, the arc $\alpha_1$ is traversed between $\alpha_4$ and $\alpha_5$. Letting $e'$ be the sub-arc of the red edge containing $\alpha_4 \cup \alpha_1 \cup \alpha_5$, we also require that there is an isotopy of $e'$, in the complement of the rest of the graph, to an unknotted arc in $B$. As in Figure \ref{Clasp Move}, we may then perform crossing changes to introduce a clasps between adjacent arcs. It is easily checked that this clasp move preserves the Brunnian property.  

\begin{center}
\begin{figure}[ht]
\centering
\labellist \small\hair 2pt
\pinlabel $\alpha_1$ [r] at 38 131
\pinlabel $\alpha_2$ [r] at 97 137
\pinlabel $\alpha_3$ [l] at 137 137 
\pinlabel $\alpha_4$ [tr] at 191 177
\pinlabel $\alpha_5$ [br] at 195 96
\endlabellist
\includegraphics[scale=.5]{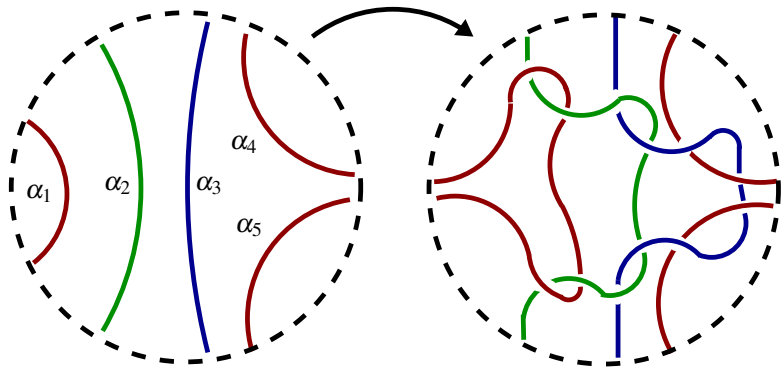}
\caption{The clasp move in the case when all three edges of the graph are involved.}
\label{Clasp Move}
\end{figure}
\end{center}

Although the clasp move creates many Brunnian theta graphs, it is not clear how to keep track of fundamental properties (such as hyperbolicity) under the clasp move. Additionally, very little is known about sequences of clasp moves relating two Brunnian theta graphs. 

We can, however, use clasping to show that there exist Brunnian theta graphs which are \emph{not} hyperbolic. Figure \ref{ToroidalBrunnian} shows an example of a Brunnian theta graph with an essential torus in its exterior. It was created by isotoping a Kinoshita-Wolcott graph to the position required to apply the clasping move via an isotopy which moved a point on one of the edges around a trefoil knot. A graph with an essential torus in its exterior is neither hyperbolic nor a trivial graph.

\begin{figure}[ht]
\includegraphics[scale=.5]{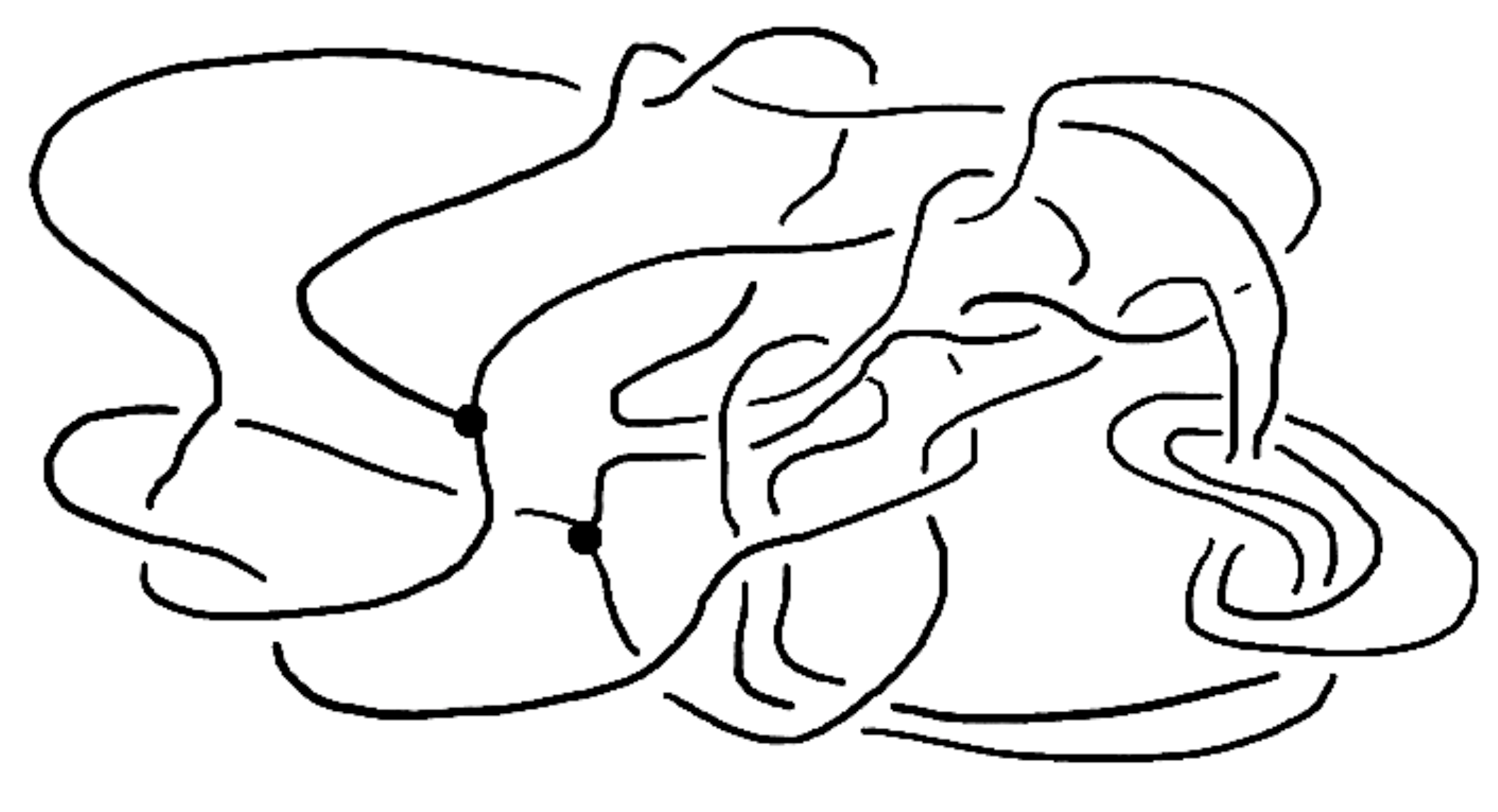}
\caption{A toroidal Brunnian theta graph. The swallow-follow torus for the double-stranded trefoil on the right is an essential torus in the exterior of the theta graph. }
\label{ToroidalBrunnian}
\end{figure}

\section{New Examples of Brunnian Theta Graphs}

Besides the Kinoshita graph and vertex sums of the Kinoshita graph with itself, are there other hyperbolic Brunnian theta graphs? In this section, we give a new infinite family of examples of diagrams of spatial theta curves. In the next section we will prove that infinitely many of them are also non-trivial. These examples have the property that they are of ``low bridge number''. Forthcoming work \cite{TT} will show that this implies that these graphs are vertex-prime. Furthermore, since they are low bridge number it is likely that they are hyperbolic. Section \ref{Questions} concludes this paper with some questions for further research. 

A pure $n$-braid representative consists of $n$ arcs (called strands)  in $Q = \{(x,y,z) \in \R^3 : -1 \leq z\leq 1\}$ such that the $i$th strand has endpoints at $(i, 0, \pm 1)$ and for each arc projecting onto the $z$-axis is a strictly monontonic function. Two pure $n$-braid representatives are equivalent if there is an isotopy in $Q$ from one to the other which fixes $\boundary Q$. The set of equivalence classes is $PB(n)$. Two pure $n$-braid representatives can be ``stacked'' to create another pure $n$-braid representative by placing one on top of the other and then scaling in the $z$-direction by $1/2$. Applying this operation to equivalence classes we obtain the group operation for $PB(n)$.  If $\sigma$ and $\rho$ are elements of $PB(n)$, we let $\sigma \rho$ denote the braid having a representative created by stacking a representative for $\sigma$ on top of a representative for $\rho$ and then scaling in the $z$-direction by $1/2$.

Let $\phi\co PB(4) \to PB(2)$ be the homomorphism which forgets the last two strands. For each $A \in \ker \phi$ we will construct a family $G(A, t_1, t_2)$ for $t_1, t_2 \in \Z$ of theta graphs with the Brunnian property. We will construct $G(A,t_1,t_2)$ by placing braids into the boxes in the template shown in Figure \ref{Fig:Template}. Let $\rho \co PB(4) \to PB(6)$ be a monomorphism which ``doubles'' each of the last two strands of $A \in PB(4)$ (i.e. in $\rho(A)$ the 4th strand is parallel to the 3rd and the 6th strand is parallel to the 5th.) For a given $A \in \ker \phi$, we place $\rho(A)$ into the top braid box of Figure \ref{Fig:Template}. The shading indicates the doubled strands. There is more than one choice for the monomorphism $\rho$, as the doubled strands may be allowed to twist around each other (i.e. we may vary the framing). We will always choose the homomorphism determined by the ``blackboard framing'' (i.e. in our diagram the doubled strands are two edges of a rectangle embedded in the plane.) Into the second and fourth boxes from the top we place the braid $A^{-1}$. In the third box we place the element from $PB(2)$ consisting of two strands with $t_1$ full twists. We use the convention that, giving the strands a downward orientation, if $t_1 > 0$ there are $2|t_1|$ left-handed crossings and if $t_1 < 0$ there are $2|t_1|$ right-handed twists. Into the bottom box we place $t_2$ full twists, using the same orientation convention as for $t_1$. 
 
\begin{center}
\begin{figure}[h!]
\labellist \small\hair 2pt
\pinlabel $\rho(A)$ at 216 651
\pinlabel $A^{-1}$ at 161 504
\pinlabel $A^{-1}$ at 161 199
\pinlabel $t_1$ at 161 385
\pinlabel $t_2$ at 161 79
\endlabellist
\includegraphics[scale = .4]{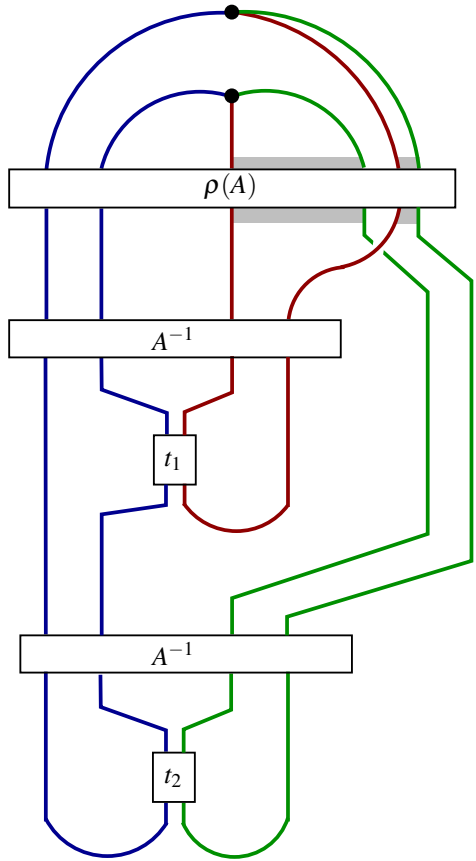}
\caption{The template for the graph $G_A$}
\label{Fig:Template}
\end{figure}
\end{center}

Considering the plane of projection in Figure \ref{Fig:Template} as the $xy$ plane, the plane $\Pi$ perpendicular to the plane of projection and cutting between the second and third boxes from the top functions as a ``bridge plane'' for $G(A, t_1,t_2)$. Observe that if we measure the height of a point $x \in G(A, t_1,t_2)$ by its projection onto the $y$-axis, each edge of $G(A, t_1, t_2)$ has a single local minimum for the height function and no other critical points in its interior. This implies that $\Pi$ cuts $G(A, t_1,t_2)$ into trees with special properties. The two trees above $\Pi$ have a single vertex which is not a leaf and their union is  isotopic (relative to endpoints) into $\Pi$.  The three trees below $\Pi$ are all edges (i.e. each is a tree with two vertices and single edge) and their union can be isotoped relative to the endpoints into $\Pi$. Thus, $\Pi$ is a bridge plane for $G(A, t_1,t_2)$ and $|G(A, t_1, t_2) \cap \Pi| = 6$. We might, therefore, say that $G(A, t_1,t_2)$ has ``bridge number at most 3''. The precise definition of bridge number for theta graphs has been a matter of some dispute (see, for example, \cite{Motohashi}). The forthcoming paper \cite{TT} explores bridge number for spatial graphs in detail.

\begin{theorem}\label{Thm: has Brunnian property}
For each $A \in \ker \phi$ and $t_1, t_2 \in \Z$, the graph $G(A, t_1,t_2)$ has the Brunnian property.
\end{theorem}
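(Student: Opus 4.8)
The plan is to read the Brunnian property directly off the diagram of $G = G(A,t_1,t_2)$ in Figure~\ref{Fig:Template} by a short case analysis over its three edges. For an edge $e$, the knot $K_e = G\setminus e$ inherits a diagram from the template obtained by erasing the two strands of $e$, and it suffices to reduce that diagram, by an explicit planar isotopy together with Reidemeister moves, to one with no crossings. So I would first fix a labelling $v_1,v_2$ of the vertices and $e_1,e_2,e_3$ of the edges and record the combinatorics of the template: which two of the six strands through the bridge plane $\Pi$ belong to each $e_i$, which four strands each $A^{-1}$ box acts on, which two strands each twist box acts on, and which strands of the top box $\rho(A)$ are the shaded (doubled) ones. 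The structural facts that drive the argument are that $\rho(A)$ is the only box all three edges pass through, that the doubled strands of $\rho(A)$ are exactly the $\rho$-images of the last two strands of $A$, and that the template contains a single copy of $A$ (inside $\rho(A)$) against two copies of $A^{-1}$.

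Now fix $e$ and erase its two strands; two things happen. First, the box $\rho(A)$ loses one of the two parallel copies in each affected doubled pair (or loses an undoubled pair outright); one then isotopes each surviving copy onto the track of its deleted mate — a legitimate isotopy, since the two strands were parallel and cobounded an embedded band — and the top box becomes an honest copy of the $4$-braid $A$, which slides down the diagram and annihilates the adjacent $A^{-1}$ box by the relation $A A^{-1}=1$. Second, after this cancellation exactly one $A^{-1}$ box survives, and — this is the point of the construction — the strands it acts on which are \emph{not} strands of $e$ are exactly the images of the first two strands of $A$; restricting $A^{-1}$ to those strands gives $\phi(A^{-1})=\phi(A)^{-1}$, which is trivial because $A\in\ker\phi$, so that box may be erased as well. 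At this stage the two twist boxes act on strands that form an unknotted, unlinked portion of $K_e$ (e.g.\ the two sides of a single cup at the bottom of the diagram), so the $t_1$ and $t_2$ full twists are nugatory and can be removed by Reidemeister~I moves. The diagram is then crossingless, so $K_e$ is the unknot; running this, with only cosmetic changes, for $e = e_1, e_2, e_3$ gives the Brunnian property. (As a sanity check, when $A=1$ the template is already the standard bridge diagram of the trivial theta graph, so the edge-removal diagrams are trivial on the nose.)

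The step I expect to be the main obstacle is the combinatorial bookkeeping underlying the second paragraph: one must verify that the two $A^{-1}$ boxes are placed in the template so that, \emph{whichever} edge is erased, the $A^{-1}$ box left uncancelled is left acting precisely on the pair of strands that $\phi$ forgets — equivalently, that $\ker\phi$, and not some larger or smaller subgroup of $PB(4)$, is exactly the right hypothesis on $A$. Pinning this down requires chasing each of the six strands carefully through the edge-deletion and through the strand-merging isotopy, keeping track of how the shaded pairs interact with the cups at the bottom of the diagram and with the twist boxes. The remaining ingredients — the braid cancellation $AA^{-1}=1$, the merge isotopies, and the unwinding of the $t_i$ twists — are routine diagram chasing once the bookkeeping is in hand.
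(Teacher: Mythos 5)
Your overall strategy is the same as the paper's: delete each edge from the template of Figure \ref{Fig:Template} and simplify the resulting knot diagram using the parallelism of the doubled strands, the cancellation $AA^{-1}=1$, the hypothesis $A\in\ker\phi$, and removal of nugatory twists; and your treatment of two of the three edges (collapse the doubled strands to recover $A$, cancel one $A^{-1}$ box, kill the leftover two-strand restriction because $\phi(A^{-1})$ is trivial, then untwist the surviving $t_i$) matches the paper's simplification of $K_R$ and $K_V$ in Figure \ref{Fig:VerifyingBrunnian}. The gap is exactly in the bookkeeping you deferred, and it does not resolve the way you predict. In the template, the two undoubled strands (the images of strands $1$ and $2$ of $A$ --- the strands $\phi$ remembers) both belong to a single edge (the blue edge), while each of the other two edges takes exactly one strand from each doubled pair; that is precisely why the top box of $K_R$ and of $K_V$ is an honest copy of $A$. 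Consequently, when you erase the blue edge, no copy of $A$ forms at the top at all: the top box retains only the doubled strands, so there is no $AA^{-1}$ cancellation available, and both $A^{-1}$ boxes survive, each restricted to the two strands that $\phi$ \emph{forgets}. The hypothesis $A\in\ker\phi$ says nothing about that restriction, so your claim that ``whichever edge is erased'' the uncancelled $A^{-1}$ box acts on the first two strands and dies by $\ker\phi$ is false for this edge. A concrete test: $A=P_{34}\in\ker\phi$ leaves, after deleting the blue edge, genuine full-twist boxes coming from the restrictions of $A^{-1}$ that your stated mechanisms cannot remove.

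The way the paper (and any repair of your argument) handles this case is different in kind: the restriction of a pure braid to two strands is just a twist region, and after deleting the blue edge every surviving braid or twist box acts on a pair of strands belonging to a single edge that caps off at a local minimum just below it, with nothing trapped between; so those twists, together with $t_1$ and $t_2$, are nugatory and can be spun away at the minima, while the parallelism of the doubled strands lets you untwist the top box, leaving a one-crossing diagram of the unknot. In other words, for the blue edge the proof uses no braid cancellation and no appeal to $\ker\phi$ at all; $\ker\phi$ is needed only for the other two edges (where the surviving two-strand restriction really is $\phi(A^{-1})$). Your nugatory-twist remark about $t_1$ and $t_2$ is the right idea, but it must be promoted to the main mechanism for the blue edge and applied also to the restricted $A^{-1}$ boxes, and your assertion that $\ker\phi$ is ``exactly the right hypothesis'' for every edge should be withdrawn; as written, the case analysis does not close.
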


\begin{proof}
The proof is easy and diagrammatic. Color the edges coming out of the top vertex in the diagram in Figure \ref{Fig:Template} by (b)lue, (r)ed, and (v)erdant from left to right. Then the edges entering into the bottom vertex are also blue, red, and verdant from left to right. In Figure \ref{Fig:VerifyingBrunnian}, we have the knots $K_B$, $K_R$, and $K_V$ obtained by removing the blue, red, and verdant edges respectively. Observe that the top braid box of $K_R$ and $K_V$ contains the braid $A$.  In each of the diagrams for $K_B$, $K_R$, and $K_V$ we have labelled certain portions with lower case letters. We now explain those regions and why each diagram can be simplified to the standard diagram for the unknot.

Consider the diagram for $K_B$. Since the third and fourth strands of the top braid box of $G(A,t_1,t_2)$ are parallel, we may untwist the diagram at region $(a)$ and at region $(b)$. At regions $(c)$ and $(d)$, we may also untwist at the minima. The end result is a diagram of a knot having a single crossing. The knot $K_B$ must, therefore, be the unknot.

Consider the diagram for $K_R$. At region $(a)$ we have the trivial 2-braid since $A \in \ker \phi$. The braid $A$ in the top braid box may therefore be cancelled with the braid $A^{-1}$ in the third-from-the-top braid box. Finally, we may untwist the $t_2$ full twists in the final braid box to arrive at the standard diagram for the unknot.

Consider the diagram for $K_V$. The braids $A$ and $A^{-1}$ cancel, at which point we may untwist the $t_1$ full twists. We may also untwist at region (a). Thus, $K_V$ is also the unknot. 
\end{proof}

\begin{center}
\begin{figure}[ht]
\labellist \small\hair 2pt
\pinlabel $K_B$ [b] at 118 448
\pinlabel $(a)$ [r] at 110 383
\pinlabel $(b)$ [l] at 215 383
\pinlabel $(c)$ [l] at 150 251
\pinlabel $(d)$ [l] at 150 88
\pinlabel $K_R$ [b] at 405 448
\pinlabel $A$ at 405 350
\pinlabel $(a)$ at 321 272
\pinlabel $A^{-1}$ at 374 109
\pinlabel $t_2$ at 374 45
\pinlabel $K_V$ [b] at 685 448
\pinlabel $A$ at 684 353
\pinlabel $A^{-1}$ at 649 275
\pinlabel $t_1$ at 654 212
\pinlabel $(a)$ at 602 112
\endlabellist
\includegraphics[scale = .5]{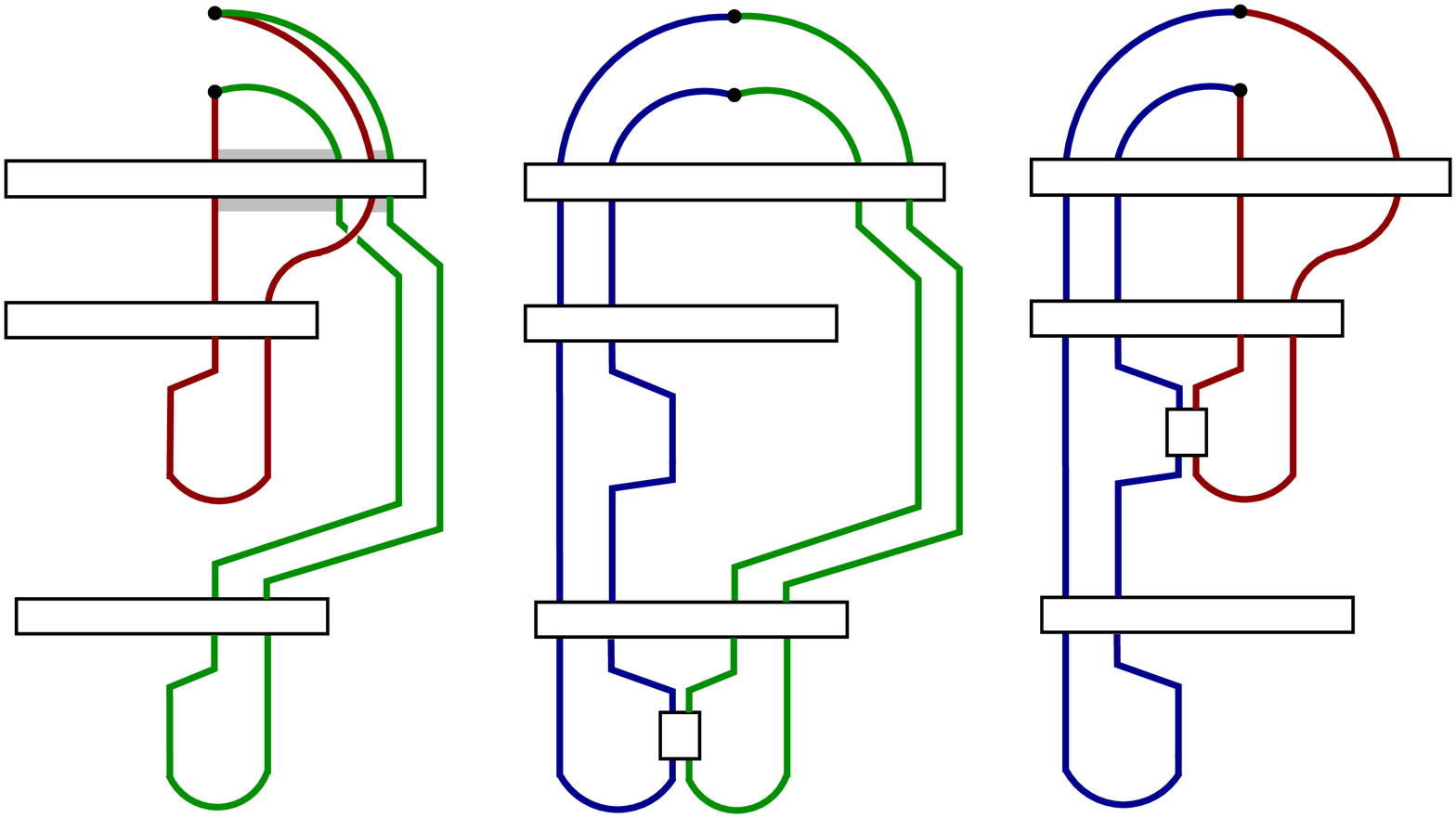}
\caption{The constituent knots $K_B$, $K_R$, and $K_V$ of $G(A,t_1,t_2)$.}
\label{Fig:VerifyingBrunnian}
\end{figure}
\end{center}

Given a graph $G(A, t_1,t_2)$ we can construct other theta graphs of bridge number at most 3 with the Brunnian property by using the clasping technique in such a way that we do not introduce any additional critical points in the interior of any edge, so it is highly unlikely that the template in Figure \ref{Fig:Template} encompasses all possible theta graphs of bridge number at most 3 with the Brunnian property. On the other hand, there are infinitely many braids $A$ such that $G(A,0,0)$ is a diagram of the trivial theta graph (see below), so the question as to what braids in $\ker \phi$ produce non-trivial theta graphs is somewhat subtle.

\section{Braids producing Brunnian theta graphs}

In this section we produce an infinite family of braids $A \in \ker \phi \subset PB(4)$ such that there exists $t_1$ such that for all $t_2$, $G(A,t_1,t_2)$ is Brunnian. To describe the braids $A$ more precisely, we recall the standard generating set for $PB(4)$.  For $i,j \in \{1,2,3,4\}$ with $i < j$, let $P_{ij}$ denote the element of $PB(4)$ obtained by ``looping'' the $i$th strand around the $j$th strand, as in Figure \ref{Fig:PB4generators}. Observe that $P_{23}^k$ produces a twist box in the 2nd and 3rd strands with $k$ full twists, using the sign convention from earlier.

\begin{center}
\begin{figure}[ht]
\labellist \small\hair 2pt
\pinlabel $P_{12}$ [b] at 73 410
\pinlabel $P_{13}$ [b] at 238 410
\pinlabel $P_{14}$ [b] at 417 410
\pinlabel $P_{23}$ [b] at 73 184
\pinlabel $P_{24}$ [b] at 248 184
\pinlabel $P_{34}$ [b] at 417 184
\endlabellist
\includegraphics[scale = .5]{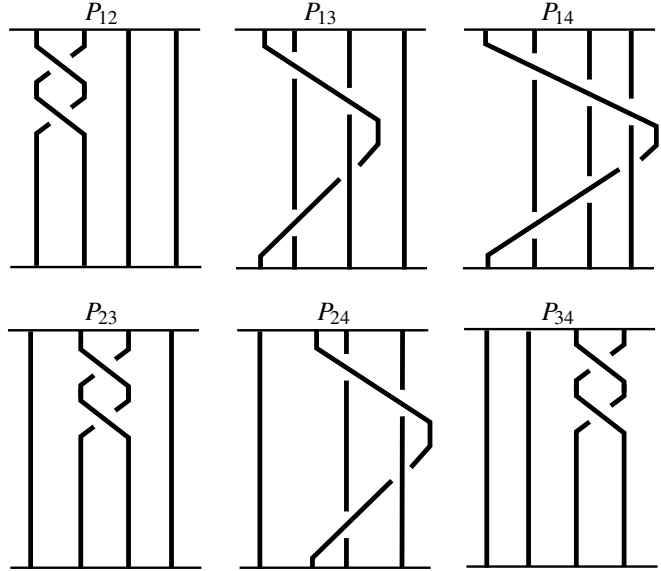}
\caption{The generators for $PB(4)$}
\label{Fig:PB4generators}
\end{figure}
\end{center}

There are non-trivial braids $A$ for which $G(A,0,0)$ is trivial. For example, for every $n$, $t_1$, and $t_2$, the graphs $G(P_{23}^n, t_1,t_2)$ are all trivial. To show that there are infinitely many braids producing non-trivial graphs, let $A_n = P_{23}^{-n}P_{13}$ and for $m \in \Z$, let $\Gamma(n,m) = G(A_n,-n,m)$ (see Figure \ref{Fig:SpecialBraid} for a diagram of $A_2$.)

\begin{center}
\begin{figure}[ht]
\includegraphics[scale = .7]{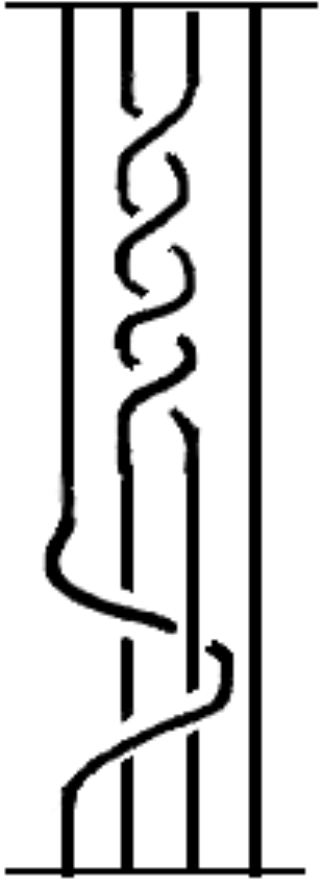}
\caption{The braid $A_2$.}
\label{Fig:SpecialBraid}
\end{figure}
\end{center}

\begin{theorem}\label{Main Theorem}
For all $n,m \in \Z$, the graph $\Gamma(n,m)$ is a Brunnian theta graph. Furthermore, suppose that for a given $(n,m) \in \Z \times \Z$, the set $S(n,m) \subset \Z\times \Z$ has the properties that if $(a,b) \in S(n,m)$ then $\Gamma(a,b)$ is isotopic to $\Gamma(n,m)$ and if $(a,b), (a',b') \in S(n,m)$ are distinct, then $a + b \neq a' + b'$. Then $S(n,m)$ has at most three distinct elements. In particular, there exist infinitely many $n \in \Z$ such that the graphs $\Gamma(n,0)$ are pairwise distinct Brunnian theta graphs.
\end{theorem}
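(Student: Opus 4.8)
\emph{The Brunnian property, and the last sentence.} That $\Gamma(n,m)$ has the Brunnian property is immediate from Theorem \ref{Thm: has Brunnian property}: one only needs $A_n=P_{23}^{-n}P_{13}\in\ker\phi$, and this holds because $P_{13}$ and $P_{23}$ involve only the first three strands while $\phi\co PB(4)\to PB(2)$ deletes strands $3$ and $4$, so $\phi(P_{13})=\phi(P_{23})=1$ and hence $\phi(A_n)=1$. Non-triviality of $\Gamma(n,m)$ (part of being a \emph{Brunnian} theta graph) and the bound $|S(n,m)|\le 3$ I will get from an isotopy invariant, described below; granting the bound, the last sentence is a formality. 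Indeed, for fixed $n_0$ the set $\{\,n\in\Z:\Gamma(n,0)\text{ is isotopic to }\Gamma(n_0,0)\,\}$ is an admissible $S(n_0,0)$ — distinct integers $n$ automatically give distinct values of ``$a+b$'' — so it has at most three elements; hence the graphs $\Gamma(n,0)$, $n\in\Z$, lie in infinitely many isotopy classes, and picking one representative from each of infinitely many of them yields an infinite pairwise non-isotopic family.

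\emph{The invariant.} To a spatial theta graph $G$ all three of whose constituent knots are unknots — by Theorem \ref{Thm: has Brunnian property}, every $\Gamma(n,m)$ qualifies — I attach the following. Writing $e_1,e_2,e_3$ for the edges and $K_i=G\setminus e_i$ for the constituent knots, each $\Sigma_2(S^3,K_i)$ is canonically $S^3$ (as $K_i$ is unknotted), and the preimage $\kappa_i(G)\subset S^3$ of the arc $e_i$ is a knot, since its two lifts close up at the two branch points, which are the vertices of $G$. An ambient isotopy of $(S^3,G)$ restricts to ambient isotopies of the $(S^3,K_i)$ and lifts to the double branched covers, so the \emph{unordered} triple of knot types $\{[\kappa_1(G)],[\kappa_2(G)],[\kappa_3(G)]\}$ is an isotopy invariant of $G$ (unordered, since an isotopy may permute the edges); and for the trivial theta graph, which lies on a $2$-sphere, all three $\kappa_i$ are unknots. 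Thus it suffices to prove, for $\Gamma(n,m)$: \emph{(a)} some $\kappa_i(\Gamma(n,m))$ is knotted — giving non-triviality; and \emph{(b)} the triple $\{[\kappa_i(\Gamma(n,m))]\}$ determines $n+m$ up to at most three values — giving $|S(n,m)|\le 3$.

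\emph{Computing the invariant.} I would first reduce to a one-parameter family: sliding a full twist between the $t_1$-box and a $P_{23}$-factor of an $A_n^{-1}$-box in the template of Figure \ref{Fig:Template} should exhibit $\Gamma(n,m)$ as isotopic to $\Gamma(n-1,m+1)$, so that up to isotopy $\Gamma(n,m)$ depends only on $k=n+m$ and it is enough to treat $\Theta_k=\Gamma(0,k)=G(P_{13},0,k)$. For $\Theta_k$, choose the constituent unknot $K_\bullet$ whose diagram in Figure \ref{Fig:VerifyingBrunnian} still carries the $k$ twist box, span it by the evident disk, and realize the double branched cover as two mirrored copies of the complementary tangle, reglued; tracking the removed edge through this unfolding — through the doubled box $\rho(P_{13})$, the $P_{13}^{-1}$-boxes, and the $k$ twist box — I expect $\kappa_\bullet(\Theta_k)$ to simplify to a member of a standard one-parameter family of two-bridge knots (twist, or double-twist, knots) with parameter affine in $k$. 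Its Alexander polynomial is then an explicit, non-constant polynomial for every $k$, which gives \emph{(a)}; and by the classification of two-bridge knots — or just by reading off the leading coefficient of that polynomial, or its determinant — the knot type pins $k$ down to at most three values, which gives \emph{(b)}. As a cross-check, or an alternative route, one may instead read a Wirtinger presentation of $\pi_1(S^3\setminus\eta(\Gamma(n,m)))$ off the template and compute its two-variable Alexander polynomial by Fox calculus; the same one-variable specializations should appear.

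\emph{The main obstacle.} The crux, and the step most prone to error, is the explicit identification of $\kappa_\bullet(\Theta_k)$: passing to the double cover ``unfolds'' the template in a way in which strands are easy to mis-track, particularly through the doubled box $\rho(P_{13})$ and across the branch points at the trivalent vertices, and one must be certain the result genuinely varies with $k$ — a priori the repeated occurrences of $P_{13}$ and $P_{13}^{-1}$ could cancel, as they do inside the constituent knots themselves. A secondary technical point is justifying the preliminary isotopy between $\Gamma(n,m)$ and $\Gamma(n-1,m+1)$, which is plausible diagrammatically but must be drawn with care. Finally, getting the sharp value ``three'' (rather than a cruder finite bound) will require a short case analysis of the coincidences inside the two-bridge family, accounting for mirror images and for the edge-relabeling symmetry of the template.
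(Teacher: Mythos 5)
Your setup is legitimate: the unordered triple of lifts $\kappa_i(G)\subset \Sigma_2(S^3,K_i)\cong S^3$ is a well-defined isotopy invariant of a theta graph with unknotted constituents, it vanishes (all unknots) on the trivial graph, and your deduction of the final sentence from the bound $|S(n,m)|\le 3$ is fine; this branched-cover viewpoint is close in spirit to Wolcott's and is in fact implicitly present in the paper, where the lift of the red edge is exactly the strongly invertible knot $L$ appearing in the proof of the Claim. The problem is that everything that actually proves the theorem is left as an expectation. First, the reduction $\Gamma(n,m)\simeq\Gamma(n-1,m+1)$ is asserted, not proved, and it is not a matter of sliding one full twist: changing $n$ changes the braid $A_n=P_{23}^{-n}P_{13}$, which occurs in three boxes of the template (as $\rho(A_n)$ and twice as $A_n^{-1}$), not just in the $t_1$ box, so the claimed isotopy needs a genuine argument. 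Second, the identification of the lifted knot is a guess, and for the red edge the guess is wrong: the paper computes that unzipping the red edge yields $3_1\ \#\ \mc{K}\bigl(-\tfrac{3}{6(m+n)+5}\bigr)$, and by the Montesinos trick its double branched cover is a reducible manifold obtained by surgery on $\kappa_R$; a knot admitting a reducible surgery whose summands are lens spaces of orders $3$ and $6(m+n)+5$ cannot be a twist or double-twist (two-bridge) knot, since among two-bridge knots only $(2,k)$ torus knots admit reducible surgeries and those produce an order-$2$ lens space summand. So at least for that edge the ``standard one-parameter two-bridge family'' you hope for does not occur, and no computation is offered for any other edge.

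Third, even granting an explicit computation of one lift, the bound of three does not follow: an isotopy between $\Gamma(a,b)$ and $\Gamma(a',b')$ may permute the edges, so equality of the unordered triples only tells you that your computed knot equals one of the three lifts of the other graph; to pin $a+b$ down to at most three values you must either compute all three lifts as functions of $(a,b)$ and analyze coincidences, or control which edge permutations can be realized by isotopies. The latter is precisely the paper's route: it first proves (via the Montesinos trick and Eudave-Mu\~noz's theorem that the Cabling Conjecture holds for strongly invertible knots) that no isotopy between graphs with different sums can take the red edge to the red edge, and then a short counting argument with the four permutations of $\{B,R,V\}$ not fixing $R$ yields the bound $3$; for non-triviality it compares the computed composite unzip with Lemma \ref{Unzipping trivial} (unzips of the trivial graph are $(2,k)$ torus knots), using primeness of torus knots and Schubert's theorem. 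As written, your proposal supplies neither the non-triviality computation nor the counting mechanism, so the core of the theorem remains unproven.
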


Before proving the theorem, we establish some background.

A \defn{handlebody} is the regular neighborhood of a finite graph embedded in $S^3$ and its \defn{genus} is the genus of the boundary surface. We will be considering genus 2 handlebodies.  A disc $D$ properly embedded in a handlebody $H$ whose boundary does not bound a disc in $\boundary H$ is called an \defn{essential disc} in $H$. If $H$ has genus 2 and if $D \subset H$ is an essential non-separating disc, the space $H \setminus \inter{\eta}(D)$ is homeomorphic to $S^1 \times D^2$. A knot isotopic to the  core of that solid torus is called a \defn{constituent knot} of $H$.  If $G \subset S^3$ is a spatial theta graph and if $H = \eta(G)$, then a disc $D \subset H$ intersecting an edge $e$ of $G$ exactly once transversally and disjoint from the other edges of $G$ is called a \defn{meridian disc} for $e$. Thus, if $D$ is a meridian disc for $e$, then $H \setminus \inter{\eta}(D)$ is a regular neighborhood of $K_e$. Observe that if $G$ is a theta graph and if $e \subset e$ is an edge, then any meridian disc $D$ for $e$ is an essential disc in the handlebody $\eta(G)$, as $D$ does not separate $\eta(G)$. 

If $G$ and $G'$ are spatial theta graphs such that $G$ is isotopic to $G'$ then the isotopy can be extended to an isotopy of the handlebody $\eta(G)$ to the handlebody $\eta(G')$. Furthermore, if the isotopy takes an edge $e \subset G$ to an edge $e' \subset G'$ then the isotopy  takes any meridian disc for $e$ to a meridian disc for $e'$. On the other hand, an isotopy of $\eta(G)$ to $\eta(G')$ does not necessarily correspond to an isotopy of $G$ to $G'$. Instead, an isotopy of $\eta(G)$ to $\eta(G')$ corresponds to a sequence of isotopies and ``edge slides'' of $G$. An \defn{edge slide} of an edge $e \subset G$ of a graph involves sliding one end of $e$ across edges of $G$ (see \cite{ST}.) As in Figure \ref{Fig:EdgeSlide}, an edge slide of a theta graph may convert a theta graph into a spatial graph that is not a theta graph. Conversely, any sequence of edge slides and isotopies of a graph $G$ corresponds to an isotopy of $\eta(G)$.

\begin{center}
\begin{figure}[ht]
\includegraphics[scale = .5]{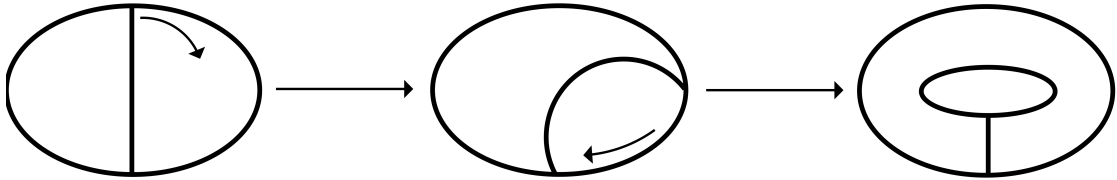}
\caption{An edge slide converting a theta graph into a non-theta graph.}
\label{Fig:EdgeSlide}
\end{figure}
\end{center}

Given a spatial theta graph $G$ and an edge $e$, an essential non-separating disc $E$ in $\eta(G)$ is \defn{along}  $e$ if it lies in a regular neighborhood of $e$, is not a meridian of $e$, if there is a meridian disc $D$ for $e$ such that $|D \cap E|$ (the number of components of $D \cap E$) is equal to $1$. Observe that if $E$ is along $e$, then $\eta(G) \setminus \inter{\eta}$ is a solid torus since $E$ is non-separating. If $E$ is along $e$, then we say that the knot which is the core of $\eta(G)\setminus \inter{\eta}(E)$ is obtained by \defn{unzipping} the edge $e$.  Figure \ref{UnzipEdge} shows two different ways of unzipping an edge. The proof of  Lemma \ref{Unzipping trivial} will also be helpful in understanding the relationship between the definition of unzipping given above and the diagrams in Figure \ref{UnzipEdge}. The term ``unzipping'' is taken from Bar-Natan and D. Thurston (see, for example, \cite{Thurston-KTG}.) It is a form of an operation also known as ``attaching a band'' to $K_e$ or ``distance 1 rational tangle replacement'' on $K_e$.

\begin{center}
\begin{figure}[ht]
\includegraphics[scale = .4]{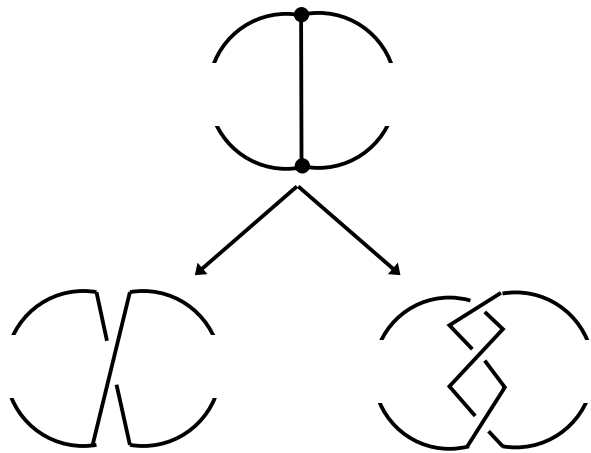}
\caption{Two ways of unzipping an edge of a spatial theta graph. As is suggested by the picture, the $\theta$-graph may be embedded in $S^3$ in some, potentially complicated, way. We do, however, require that the unzipping produce a knot and not a 2-component link.}
\label{UnzipEdge}
\end{figure}
\end{center}

Since an isotopy of a handlebody in $S^3$ to another handlebody takes discs in the first handlebody to discs in the second and preserves the number of intersections between discs, we have:

\begin{lemma}\label{isotopic unzips}
Suppose that $G$ and $G'$ are isotopic spatial theta graphs such that the isotopy takes an edge $e$ of $G$ to an edge $e'$ of $G'$. If $K \subset S^3$ is a knot obtained by unzipping the edge $e$, then there is a knot $K' \subset S^3$ which is obtained by unzipping the edge $e'$ such that $K$ and $K'$ are isotopic.
\end{lemma}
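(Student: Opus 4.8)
The plan is to push the entire unzipping data forward along the given isotopy and check that what comes out the other side is again a valid unzipping of $e'$. As recorded earlier in the excerpt, an isotopy of spatial theta graphs carrying $e$ to $e'$ extends to an isotopy of the handlebodies $\eta(G)$ to $\eta(G')$; I would realize this as an ambient isotopy $\Phi$ of $S^3$ with $\Phi(\eta(G)) = \eta(G')$, $\Phi(G) = G'$, and $\Phi(e) = e'$. Every object in the definition of "unzipping" is then simply transported by $\Phi$.

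Concretely, let $E \subset \eta(G)$ be the essential non-separating disc along $e$ that produces $K$, so that $K$ is the core of the solid torus $\eta(G) \setminus \inter{\eta}(E)$, and let $D$ be a meridian disc for $e$ with $|D \cap E| = 1$ witnessing the "along" condition. First I would set $E' = \Phi(E)$ and $D' = \Phi(D)$. Since $\Phi$ is a homeomorphism of pairs, $E'$ is a properly embedded disc in $\eta(G')$ that is essential and non-separating, because $\Phi$ preserves both properties; moreover $E$ lies in a regular neighborhood of $e$ and $\Phi$ carries $\eta(e)$ to a regular neighborhood of $e'$, so $E'$ lies in a regular neighborhood of $e'$. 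By the cited fact that the extended isotopy carries meridian discs of $e$ to meridian discs of $e'$, the disc $D'$ is a meridian disc for $e'$, and since $\Phi$ preserves the number of components of intersection we have $|D' \cap E'| = |D \cap E| = 1$. Finally, a meridian disc is characterized (up to isotopy in $\boundary \eta(G')$) by its boundary slope, so $E'$ is not itself a meridian of $e'$: otherwise $\Phi^{-1}(E') = E$ would be a meridian of $e$, contrary to hypothesis. Thus $E'$ is along $e'$.

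With $E'$ along $e'$, the knot $K'$ defined as the core of $\eta(G') \setminus \inter{\eta}(E')$ is, by definition, a knot obtained by unzipping $e'$. Because $\Phi$ takes $\eta(G) \setminus \inter{\eta}(E)$ homeomorphically onto $\eta(G') \setminus \inter{\eta}(E')$ and a homeomorphism of solid tori sends a core to a core up to isotopy, the restriction of the ambient isotopy $\Phi$ provides an isotopy of $S^3$ carrying $K$ to a curve isotopic to $K'$. Hence $K$ and $K'$ are isotopic, as required.

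I do not expect a genuine obstacle here; the content is that each clause in the definition of "along" (essential, non-separating, contained in $\eta(e')$, not a meridian, and geometric intersection one with a meridian) is invariant under the ambient isotopy. The only point requiring a little care is verifying that $E'$ is not a meridian — this is where I use that meridians are detected by boundary slope and that $\Phi$ carries meridians to meridians bijectively — and the mild technical step of upgrading the handlebody isotopy to an ambient isotopy of $S^3$ so that the cores are literally related by an isotopy rather than merely abstractly homeomorphic.
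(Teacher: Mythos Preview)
Your argument is correct and is essentially the same as the paper's: the paper does not give a separate proof of this lemma but simply records, in the sentence immediately preceding it, that an isotopy of handlebodies carries discs to discs and preserves intersection numbers, which is exactly the content you have unpacked in detail. Your careful verification of each clause of ``along'' is a faithful expansion of that one-line justification.
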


\subsection{Rational Tangles}
The key step in  our proof of Theorem \ref{Main Theorem} is to show that unzipping $\Gamma(n,m)$ does not produce any knot that can be obtained by unzipping a trivial theta graph along one of its edges. Analyzing the knots we do get will show, as a by-product, that infinitely many of the $\Gamma(n,m)$ are distinct. We use rational tangles to analyze our knots.

A \defn{rational tangle} is a pair $(B,\tau)$ where $B$ is a 3--ball and $\tau \subset B$ is a properly embedded pair of arcs which are isotopic into $\boundary B$ relative to their endpoints. We mark the points $\boundary \tau \subset \boundary B$ by NW, NE, SW, and SE as in Figure \ref{Fig:RationalTangleBasics}. Two rational tangles $(B,\tau)$ and $(B,\tau')$ are \defn{equivalent} if there is a homeomorphism of pairs $h\co (B,\tau) \to (B',\tau')$ which fixes $\boundary B$ pointwise. Conway \cite{Conway} showed how to associate a rational number $r \in \Q \cup \{1/0\}$ to each rational tangle in such a way that two rational tangles are equivalent if and only if they have the same associated rational number. We briefly explain the association, using the conventions of \cite[Lecture 4]{Gordon}. Using the 3-ball with marked points as in Figure \ref{Fig:RationalTangleBasics}, we let the rational tangle $\mc{R}(0/1)$ consist of a pair of horizontal arcs having no crossings  and we associate to it the rational number $0 = 0/1$. The rational tangle $\mc{R}(1/0)$, consisting of a pair of vertical arcs having no crossings, is given the rational number $1/0$ (thought of as a formal object.) Let $h \co B \to B$ and $v\co B \to B$ be the horizontal and vertical half-twists, as shown in Figure \ref{Fig:RationalTangleBasics}. Observe that the rational tangle $v^{2k}\mc{R}(0/1)$ is a twist box with $-k$ full twists, using the orientation convention from earlier.

\begin{center}
\begin{figure}[h!]
\labellist \small\hair 2pt
\pinlabel $NW$ [br] at 11 472
\pinlabel $NE$ [bl] at 138 472
\pinlabel $SW$ [tr] at 13 343
\pinlabel $SE$ [tl] at 144 342
\pinlabel $\mc{R}(1/0)$ [b] at 248 484
\pinlabel $\mc{R}(0/1)$ [b] at 423 484
\pinlabel $h$ [b] at 219 242
\pinlabel $v$ [b] at 219 111
\endlabellist
\includegraphics[scale = .4]{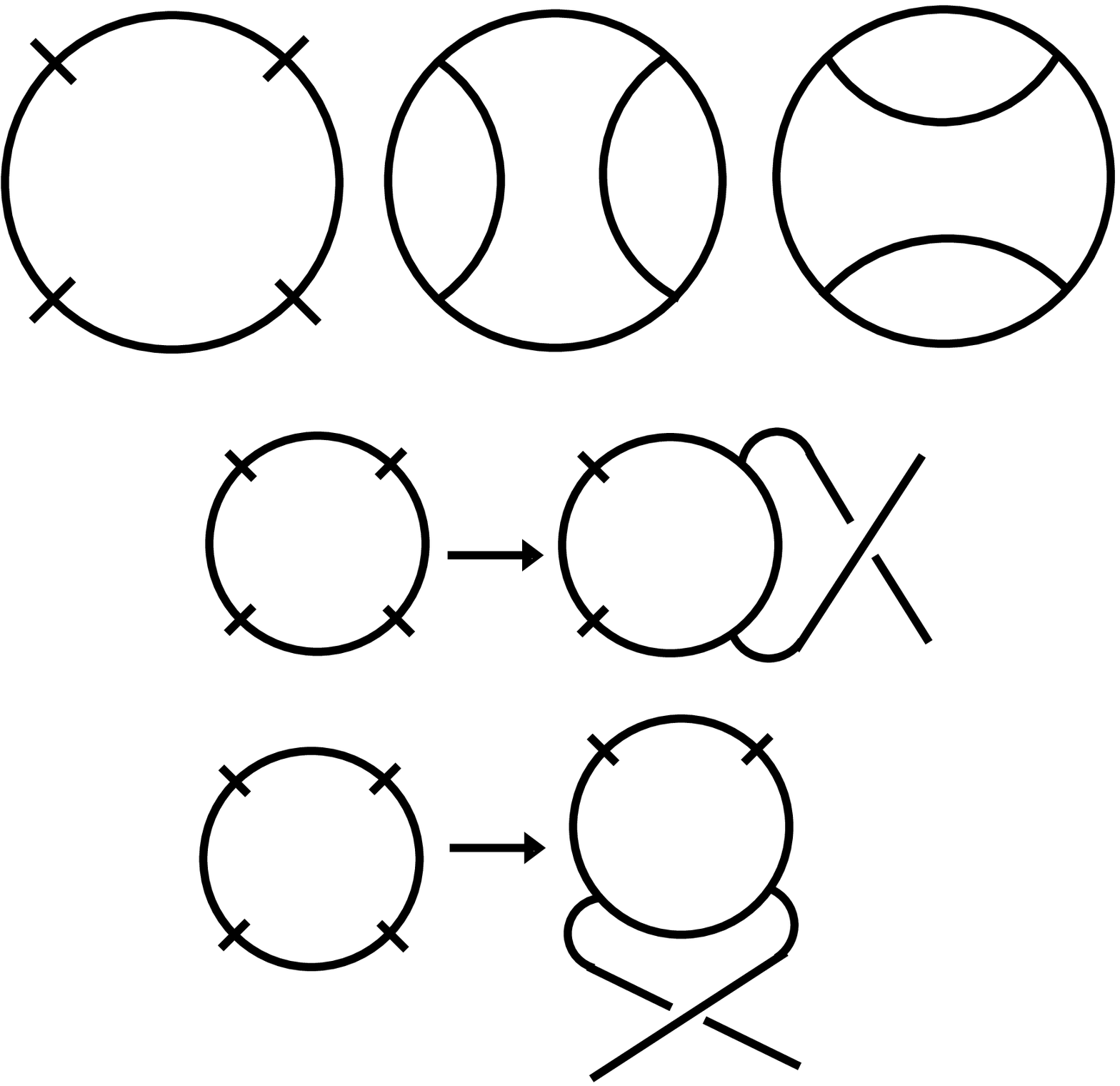}
\caption{The basic transformations of a rational tangle}
\label{Fig:RationalTangleBasics}
\end{figure}
\end{center}

Let $a_1, a_2, \hdots, a_k$ be  a finite sequence of integers such that $a_2, \hdots, a_k \neq 0$. Let $\mc{R}(a_1, \hdots, a_k)$ be the rational tangle defined by
\[
\mc{R}(a_1, a_2, \hdots, a_k) = \begin{cases}
h^{a_1}v^{a_2} \cdots h^{a_{k-1}}v^{a_k}\mc{R}(1/0) & k \text{ even }\\
h^{a_1}v^{a_2} \cdots v^{a_{k-1}}h^{a_k}\mc{R}(0/1) & k \text{ odd }\\
\end{cases}
\]
We assign the rational number
\[
p/q = a_1 + \frac{1}{a_2 + \frac{1}{a_3 + \frac{1}{\cdots + \frac{1}{a_k}}}}
\]
to $\mc{R}(a_1, a_2, \hdots, a_k)$ and we define $\mc{R}(p/q) = \mc{R}(a_1, \hdots, a_k)$, with $p$ and $q$ relatively prime.

We define the \defn{distance} between two rational tangles $\mc{R}(p/q)$ and $\mc{R}(p'/q')$ to be $\Delta(p/q,p'/q') = |pq' - p'q|$. Observe that in the 3-ball $B$, there is a disc $D \subset B$ such that $\boundary D$ partitions the marked points $\{NW, SW, NE, SE\}$ into pairs and which separates the strands of a given rational tangle $\mc{R}(p/q)$. Indeed, given a disc $D \subset B$ whose boundary partitions the marked points into pairs, there is a rational tangle $\mc{R}(p/q)$ (unique up to equivalence of rational tangles) such that $D$ separates the strands of $\mc{R}(p/q)$. We call $D$ a \defn{defining disc} for $\mc{R}(p/q)$. If $D$ is a defining disc for $\mc{R}(p/q)$ and $D'$ is a defining disc for $\mc{R}(p'/q')$ such that, out of all such discs, $D$ and $D'$ have been isotoped to intersect minimally, then it is not difficult to show that $\Delta(p/q,p'/q') = |D \cap D'|$ (i.e. the distance between the rational tangles is equal to the minimum number of arcs of intersection between defining discs.)

From a rational tangle $\mc{R}(p/q)$ we can create the unknot or a 2-bridge knot or link $\mc{K}(p/q) = \mc{D}(\mc{R}(p/q))$ by taking the so-called \defn{denominator closure} $\mc{D}$ of $\mc{R}(p/q)$ where we attach the point NW to the point SW and the point NE to the point SE by an unknotted pair of arcs lying in the exterior of $B$, as in Figure \ref{Fig:DenomClosure}. Thus, the right-handed trefoil is $\mc{K}(1/3)$ and the left-handed trefoil is $\mc{K}(-1/3)$. 

\begin{center}
\begin{figure}[h!]
\labellist \small\hair 2pt
\pinlabel $\mc{R}(p/q)$ at 191 88
\endlabellist
\includegraphics[scale = .4]{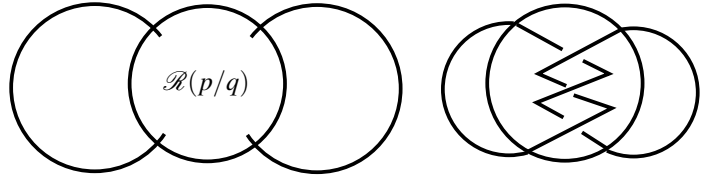}
\caption{On the left is the denominator closure $\mc{D}(\mc{R}(p/q))$ of the rational tangle $\mc{R}(p/q)$. On the right, we see that the right-handed trefoil is the denominator closure of the rational tangle $\mc{R}(1/3)$.}
\label{Fig:DenomClosure}
\end{figure}
\end{center}

\begin{theorem}[{Schubert \cite{Schubert}}]\label{Schubert}
Let $p/q, p'/q' \in \Q \cup \{1/0\}$ with $q, q' > 0$ and the pairs $p,q$ and $p',q'$ relatively prime. The knot or link $\mc{K}(p/q)$ is isotopic (as an unoriented knot or link) in $S^3$ to the knot or link $\mc{K}(p'/q')$ if and only if $q = q'$ and either $p \equiv p' \mod q$ or $pp' \equiv 1 \mod q$.
\end{theorem}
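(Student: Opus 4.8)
The plan is to prove the two implications by different means: sufficiency (the arithmetic condition forces an isotopy) I would establish constructively by explicit tangle manipulations, and necessity (an isotopy forces the arithmetic condition) I would establish by passing to double branched covers and invoking the classification of lens spaces.

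For sufficiency, assume $q = q'$ and that the arithmetic condition holds; it suffices to realize each of the two generating relations $p \mapsto p + q$ and $p \mapsto p'$ with $pp' \equiv 1 \pmod q$ by an ambient isotopy. The relation $p \equiv p' \pmod q$ comes from the denominator closure $\mc{D}$: increasing the integer part $a_1$ in the continued fraction $p/q = a_1 + 1/(a_2 + \cdots)$ inserts horizontal full twists into the outermost block of $\mc{R}(p/q)$, and these can be unwound through the closure arcs of $\mc{D}$, changing $p$ by a multiple of $q$ without changing $\mc{K}(p/q)$. The relation $pp' \equiv 1 \pmod q$ comes from the $\pi$-rotation of the tangle ball that turns the plat presentation upside down; this symmetry of $S^3$ carries $\mc{D}(\mc{R}(p/q))$ to the denominator closure of the tangle whose continued fraction is read in the opposite order, and an elementary continued-fraction identity identifies that fraction as $p'/q$ with $p' \equiv p^{-1} \pmod q$. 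I would present both as explicit diagrammatic moves (twist absorptions and flypes), making this direction entirely constructive.

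Necessity carries the real content. Isotopic knots or links have homeomorphic double branched covers, and since an ambient isotopy of $S^3$ is orientation preserving, the induced homeomorphism of branched covers is orientation preserving as well. So I would first compute $\Sigma_2$, the double cover of $S^3$ branched over $\mc{K}(p/q)$. Writing $S^3 = B \cup_{\partial} B'$ as the union of the tangle ball containing $\mc{R}(p/q)$ and the complementary ball containing the closure arcs, the double cover of each ball branched over its rational (hence trivial-as-a-pair) tangle is a solid torus; the slope along which the two solid tori are glued is governed by $p/q$, and a direct check identifies $\Sigma_2(\mc{K}(p/q)) \cong L(q,p)$ as oriented manifolds, with $q = |H_1|$ recording the determinant. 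Thus an isotopy $\mc{K}(p/q) \simeq \mc{K}(p'/q')$ yields an orientation-preserving homeomorphism $L(q,p) \cong L(q',p')$, whence $q = q'$.

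Finally I would invoke the Reidemeister--Franz classification of lens spaces: $L(q,p)$ and $L(q,p')$ are orientation-preservingly homeomorphic if and only if $p' \equiv p^{\pm 1} \pmod q$. Since $p' \equiv p$ is the alternative $p \equiv p' \pmod q$ and $p' \equiv p^{-1}$ is the alternative $pp' \equiv 1 \pmod q$, this yields exactly the two stated conditions, the absence of extra sign ambiguities being precisely the payoff of tracking orientations throughout. The main obstacle is the lens space classification itself, whose hard core is that homotopy equivalence is too coarse (it detects only $pp' \equiv \pm n^2 \pmod q$), so one must use the finer simple-homotopy invariant, the Reidemeister torsion, to separate homeomorphism types; I would either cite this classical fact or reprove it via torsion, after which the orientation bookkeeping in the branched-cover computation is the only remaining delicate point. (One can alternatively follow Schubert's original combinatorial route through the uniqueness of the $2$-bridge sphere, but the branched-cover argument is cleaner and fits the rational-tangle framework already in play.)
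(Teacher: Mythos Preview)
Your outline is a correct and standard route to Schubert's classification, but note that the paper does not actually prove this statement: it is quoted as a classical result attributed to Schubert, with a remark pointing the reader to \cite{Cromwell} and \cite{KL} for details. So there is no ``paper's own proof'' to compare against beyond the citation.

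That said, your approach via double branched covers and the Reidemeister--Franz classification of lens spaces is the modern standard and is genuinely different from Schubert's original argument, which (as you mention at the end) proceeds combinatorially through the uniqueness of the $2$-bridge sphere. The branched-cover route has the advantage of reducing everything to a well-packaged $3$-manifold invariant, at the cost of importing the torsion-based lens space classification as a black box; Schubert's route is self-contained within knot theory but is considerably more intricate. Your handling of the sufficiency direction (absorbing outer twists through the closure arcs for $p \mapsto p+q$, and the $\pi$-rotation plus continued-fraction reversal for $p \mapsto p^{-1} \bmod q$) is correct and is exactly how the references the paper cites present it. The one point worth tightening is the orientation bookkeeping: you correctly observe that an ambient isotopy of $S^3$ lifts to an orientation-preserving homeomorphism of the branched covers, which is what rules out the extra $\pm$ sign in the lens space classification and yields precisely the two stated congruences rather than four.
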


\begin{remark}
For more on Schubert's theorem, see \cite[Theorem 8.7.2]{Cromwell} or \cite[Theorem 3]{KL}. Since we are using the denominator closure of rational tangles our convention and the statement of Schubert's theorem differ from the usual convention and statement by exchanging numerators and denominators. See the discussion following Theorem 3 of \cite{KL}.
\end{remark}

\subsection{Unzipping the trivial graph}
Since we want to show that each graph in a certain family of graphs is non-trivial, the following will be useful.

\begin{lemma}\label{Unzipping trivial}
Suppose that $G \subset S^3$ is the trivial theta graph and that $K$ is a knot obtained by unzipping an edge $e$ of $G$. Then either $K$ is the unknot or there exists $k \in \Z$, odd such that $K$ is a $(2,k)$ torus knot.
\end{lemma}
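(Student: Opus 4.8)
The plan is to analyze the trivial theta graph $G$ concretely and understand exactly what an unzipping disc looks like. Since $G$ is trivial, it is isotopic into a 2-sphere in $S^3$, so we may take $G$ to lie in a plane $P \subset S^3$, with its three edges $e_1, e_2, e_3$ drawn as three arcs joining two vertices, bounding two complementary discs in $P$ together with a third ``outer'' region. By Lemma~\ref{isotopic unzips}, the isotopy class of the knot obtained by unzipping depends only on the isotopy class of the pair $(G, e)$, so it suffices to understand unzipping along, say, $e = e_1$ for the standard planar theta graph.

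First I would set up coordinates for the handlebody $H = \eta(G)$ and its discs. A meridian disc $D$ for $e_1$ is a small disc transverse to $e_1$. An unzipping disc $E$ is an essential non-separating disc in $\eta(e_1) \subset H$, not a meridian, with $|D \cap E| = 1$. The key observation is that $\eta(e_1)$ is just a 3-ball (a thickened arc), and $E$ is a properly embedded disc in $H$ whose boundary runs along the ``tube'' around $e_1$; because $|D \cap E| = 1$, the boundary $\partial E$ runs along $e_1$ exactly once (geometrically) while possibly twisting around the tube, and closes up using an arc in the complementary part of $\partial H$ near the two vertices. The resulting core of $H \setminus \inter{\eta}(E)$ is the knot $K_{e_1} = G \setminus e_1$ (an unknot, since $G$ is trivial) \emph{modified} by a band move: unzipping is exactly ``attach a band to $K_{e_1}$'', where the band is a neighborhood of $e_1$ itself, and the only freedom is how many times the band twists — parametrized by an integer $k$ — since the core of $e_1$ together with the rest of the graph lies in a plane.

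Next I would identify the result of this band move in tangle terms. Cutting $S^3$ along a sphere meeting $G$ in the four points where $e_1$ meets the small 3-ball containing the twisting region, the knot $K$ decomposes as the denominator closure of a rational tangle: the trivial arc configuration (coming from the planar, unknotted $K_{e_1}$ together with the unzipping band) receives $k$ half-twists from the band's framing, producing the tangle $\mc{R}(1/k)$ up to reflection, whose denominator closure $\mc{K}(1/k) = \mc{D}(\mc{R}(1/k))$ is precisely the $(2,k)$ torus knot when $k$ is odd (and a 2-component link when $k$ is even, which is excluded by the standing requirement that unzipping produce a knot). When $k = \pm 1$ this is the unknot. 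This gives the dichotomy in the statement.

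The main obstacle, and the step I would spend the most care on, is the rigorous reduction from ``$E$ is an arbitrary unzipping disc with $|D\cap E|=1$'' to ``$E$ is a band along $e_1$ with some integer framing''. One has to argue that, because $K_{e_1}$ is unknotted and lies together with $e_1$ in a plane, every non-separating disc in $\eta(e_1)$ meeting a fixed meridian once is, after isotopy in $H$, standard — i.e. it is determined up to isotopy by a single twisting integer $k$. I would handle this by using the product structure $\eta(e_1) \cong D \times [0,1]$ and an innermost-disc / outermost-arc argument to straighten $\partial E$ relative to the $I$-fibers, reducing $\partial E$ to a curve that goes along the tube once, so that $E$ is isotopic to $(\text{arc in } \partial D) \times [0,1]$ reglued with a twist; then the number of twists is the only invariant, and the planarity of the ambient $G \setminus e_1$ forces the closure to be a rational tangle of the form $\mc{R}(1/k)$. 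Once this normal form is established, Schubert's theorem (Theorem~\ref{Schubert}) is not even needed here — the identification of $\mc{K}(1/k)$ with the $(2,k)$ torus knot is classical — but it will be the tool used later to distinguish the $\Gamma(n,m)$.
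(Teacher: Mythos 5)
Your proposal is correct and follows essentially the same route as the paper: reduce to the standard planar theta graph via Lemma \ref{isotopic unzips}, observe that an unzipping disc along $e$ meets a meridian disc once and is therefore determined by a single integer twist parameter $k$, and identify the resulting knot as the closure of a two-strand twist region, i.e.\ the unknot or a $(2,k)$ torus knot with $k$ odd (the even case being excluded because unzipping must yield a knot). The only difference is bookkeeping: where you propose an innermost-disc/outermost-arc normal form to pin down the disc, the paper extracts the same classification from the rational-tangle correspondence and the distance formula $\Delta(1/0,k/\ell)=|\ell|=|D\cap E|=1$.
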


\begin{proof}
Let $G$ be the trivial theta graph and let $e \subset G$ be an edge. Observe that there is an isotopy of $G$ which interchanges any two edges. Thus, we may consider $G$ to be the union of the unit circle in $\R^2$ with a horizontal diameter $e$, as in Figure \ref{Fig:RationalTangles}. We may consider the neighborhood $\eta(e)$ of $e$ as a 3-ball $B$ with a vertical disc as a meridian disc for $e$. The graph $G$ intersects $B$ in four punctures, which we label NW, NE, SW, and SE as usual. Take $D$ to be the meridian disc for $e$ and let $E \subset B$ be a disc with boundary an essential curve in $\boundary B \setminus G$, which cannot be isotoped to be disjoint from $D$, and for which $|D \cap E| = 1$. Observe that $D$ is the defining disc for the rational tangle $\mc{R}(1/0)$. If $E$ is the defining disc for the rational tangle $\mc{R}(k/\ell)$, then
\[
1 = \Delta(1/0, k/\ell) = |\ell|.
\] 
Consequently, the rational tangle $\mc{R}(k/1)$ consists of $k$ horizontal half twists. Thus the knot which is the core of $\eta(G) \setminus \inter{\eta}(E)$ is a $(2,\pm k)$ twist knot. 
\end{proof}

\begin{center}
\begin{figure}[ht]
\labellist \small\hair 2pt
\pinlabel $G$ [r] at 180 253
\pinlabel $B$ [l] at 465 253
\pinlabel $\boundary E$ [bl] at 83 66
\pinlabel $\tau_E$ [l] at 341 75
\endlabellist
\includegraphics[scale = .5]{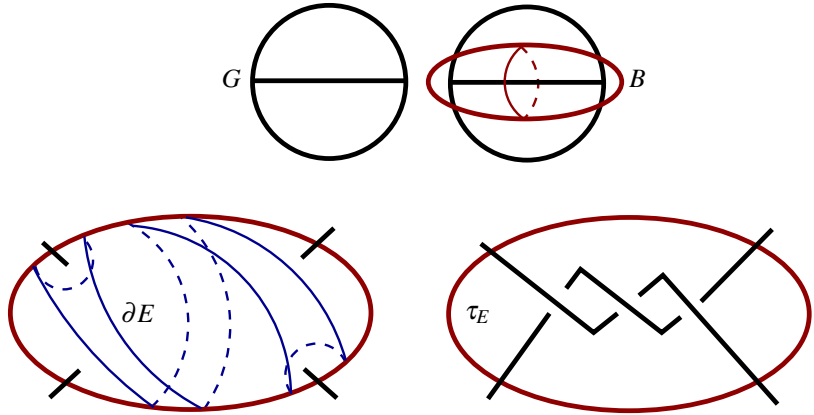}
\caption{Upper Left: The trivial graph $G$. Upper Right: the ball $B = \eta(e)$. Lower Left: A disc $E$. Lower Right: The rational tangle $\mc{R}(-3)$ with defining disc $E$.}
\label{Fig:RationalTangles}
\end{figure}
\end{center}

The following corollary follows immediately from Lemmas \ref{isotopic unzips} and \ref{Unzipping trivial}.

\begin{corollary}
Suppose that $G \subset S^3$ is a trivial spatial theta graph. Then for all edges $e \subset G$ and for any knot $K$ obtained by unzipping $e$ there exists an odd $k \in \Z$ such that $K$ is a $(2,k)$ torus knot, i.e. $\mc{K}(1/k)$.
\end{corollary}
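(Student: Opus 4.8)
The plan is to deduce this immediately from Lemmas \ref{isotopic unzips} and \ref{Unzipping trivial}, so essentially no new argument is required beyond bookkeeping. First I would invoke the definition of a trivial spatial theta graph: $G$ is isotopic to a theta graph lying in a $2$-sphere $\Sigma \subset S^3$, and any two theta graphs in a $2$-sphere in $S^3$ are ambient isotopic; hence $G$ is isotopic to the concrete model used in the proof of Lemma \ref{Unzipping trivial}, namely the unit circle in $\R^2$ together with a horizontal diameter. Fix such an isotopy and let $e_0$ denote the edge of the standard model that is the image of the given edge $e$.

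Next, let $K$ be \emph{any} knot obtained by unzipping $e$. Applying Lemma \ref{isotopic unzips} to the chosen isotopy carrying $(G,e)$ to (standard model, $e_0$), we obtain a knot $K'$ which is obtained by unzipping $e_0$ and which is isotopic to $K$ in $S^3$. Then I would apply Lemma \ref{Unzipping trivial} to $K'$: either $K'$ is the unknot, or $K'$ is a $(2,k)$ torus knot for some odd $k \in \Z$. Since the unknot is the $(2,1)$ torus knot $\mc{K}(1/1)$ and $1$ is odd, in every case $K'$, and therefore $K$, is a $(2,k)$ torus knot $\mc{K}(1/k)$ with $k$ odd. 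Note that the quantifier ``for all edges $e$'' needs no separate treatment, because Lemma \ref{Unzipping trivial} already applies to an arbitrary edge of the trivial graph, and ``for any knot $K$'' is handled by the fact that both lemmas quantify over all knots produced by unzipping.

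There is no real obstacle here; the only points that require a moment of care are the two pieces of bookkeeping just mentioned: that ``trivial'' lets us isotope $G$ all the way to the specific standard model appearing in Lemma \ref{Unzipping trivial} (immediate from uniqueness of the theta graph in $S^2$ up to isotopy), and that the unknot alternative of Lemma \ref{Unzipping trivial} is absorbed into the uniform conclusion by recognizing the unknot as $\mc{K}(1/1)$, a $(2,k)$ torus knot with $k=1$ odd. All of the genuine content lives in Lemmas \ref{isotopic unzips} and \ref{Unzipping trivial}.
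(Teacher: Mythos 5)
Your proposal is correct and follows exactly the route the paper intends: the paper derives this corollary by combining Lemma \ref{isotopic unzips} with Lemma \ref{Unzipping trivial}, and your bookkeeping (transferring the unzip along the isotopy to the standard trivial graph, then absorbing the unknot case as $\mc{K}(1/1)$, the $(2,1)$ torus knot) is just the explicit version of the paper's ``follows immediately'' remark.
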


\subsection{Proof of Theorem \ref{Main Theorem}}

The proof is similar in spirit to \cite[Section 3]{Wolcott}. We do not, however, use Wolcott's Theorem 3.11 as that theorem would require us to work with links, rather than with knots. Potentially, however, a clever use of \cite[Theorem 3.11]{Wolcott} would show that a much wider class of braids $A$ create non-trivial graphs $G(A,t_1,t_2)$. Our method, however, also allows us, using a result of Eudave-Mu\~noz concerning reducible surgeries on strongly invertible knots, to show that we have infinitely many distinct Brunnian theta graphs.

Let $n,m \in \Z$,  and let $G = \Gamma(n,m)$. To prove that $G$ is a Brunnian theta graph, by Theorem \ref{Thm: has Brunnian property}, we need only show that $G$ is non-trivial.

Let $v_+$ be the upper vertex of $G$ in Figure \ref{Fig:Template} and let $v_-$ be the lower vertex. Recall that we color the edges of $G$ (from left to right at each vertex) as blue, red, and verdant. Isotope $G$ so that the endpoint of the verdant edge $V$ adjacent to $v_-$ is moved near $v_+$ by sliding it along the red edge, as on the left of Figure \ref{Fig:UnzippingR}. This isotopy creates a diagram of $G$ such that red edge has no crossings. Let $K$ be the knot obtained by unzipping the red edge, as in the middle of Figure \ref{Fig:UnzippingR} (choosing the unzip so that no twists are inserted in the diagram along $V$). Using the doubled strands in the top braid box, isotope $K$ so that it has the diagram on the right of  Figure \ref{Fig:UnzippingR}.

Inserting the braid $A_n$ into the template, as specified in Figure \ref{Fig:Template}, our knot $K$ has the diagram on the top left of  Figure \ref{Fig:SimplifyingK1}. Let $3_1$ be the right-handed trefoil. Now perform the isotopies indicated in Figure \ref{Fig:SimplifyingK1} to see that $K$ is the connected sum of $3_1$ and the knot 
\[
\mc{K}(-\frac{3}{6(m+n)+5}) = \mc{D}(\mc{R}(0, -2(m+n)-1, -1, -1, -1)).
\]
Since torus knots are prime, $K$ is not a $(2,k)$ torus knot for any $k \in \Z$ unless $\mc{K}(-\frac{3}{6(m+n)+5})$  is the trivial knot, that is $\mc{K}(-\frac{3}{6(m+n)+5})  = \mc{K}(1)$. By Schubert's theorem, this can only happen if $6(m+n)+5 = 1$, an impossibility. Thus, each $\Gamma(n,m)$ is a Brunnian theta graph.

To prove the part about distinctness, we use a theorem of Eudave-Mu\~noz and the Montesinos trick \cite{Montesinos} (see also \cite{Gordon} for a nice explanation.) We begin by showing:

\textbf{Claim:} If $a + b \neq a' + b'$, then there is no isotopy from $\Gamma(a',b')$ to $\Gamma(a,b)$ which takes the red edge of $\Gamma(a',b')$ to the red edge of $\Gamma(a,b)$.

We prove this by contradiction. Let $B \subset S^3$ be a regular neighborhood of the red edge of $\Gamma(a,b)$ and let $W = S^3\setminus \inter{B}$ be the complementary 3--ball. Mark the points of $\Gamma(a,b) \cap \boundary B$ by NE, SE, NW, SW so that a meridian disc for the red edge of $\Gamma(a,b)$ corresponds to the rational tangle $\mc{R}(1/0)$ and the disc $E$ along which we unzip $\Gamma(a,b)$ to produce $K = 3_1 \# \mc{K}(-\frac{3}{6(a+b)+5})$ corresponds to the rational tangle $\mc{R}(0/1)$. Let $\tau = K \cap W$. 

The isotopy of $\Gamma(a,b)$ to $\Gamma(a', b')$ takes $B$ to a regular neighborhood $B'$ of the red edge of $\Gamma(a',b')$. In $B'$ there is a disc $D'$ which is along the red edge of $\Gamma(a', b')$ such that unzipping $\Gamma(a', b')$ along $D'$ produces $(3_1 \# \mc{K}(-\frac{3}{6(a'+b')+5}) )$. Reversing the isotopy, takes $D'$ to a disc $D \subset B$ which is along $e$. Let $\mc{R}(p/q)$ be the rational tangle corresponding to $D$. The knot $K' = \tau \cup \mc{R}(p/q))$ is isotopic to the result of unzipping $\Gamma(a',b')$ along $D'$ and so $K' = (3_1 \# \mc{K}(-\frac{3}{6(a'+b')+5}))$. If the disc $D$ is isotopic to the disc $E$, the rational tangles $\mc{R}(p/q)$ and $\mc{R}(0/1)$ are equivalent. In which case, $K$ is isotopic to $K'$. But this implies that $a + b = a' + b'$, a contradiction. Thus, the rational tangles $\mc{R}(0/1)$ and $\mc{R}(p/q)$ are distinct (since the discs are not isotopic). 

Since $\tau \cup \mc{R}(1/0)$ is the unknot in $S^3$, the double branched cover of $W$ over $\tau$ is the exterior of a strongly invertible knot $L \subset S^3$. Since $K = \tau \cup \mc{R}(0/1)$ and $K' = \tau \cup \mc{R}(p/q)$ are composite knots, the double branched covers of $S^3$ over $K$ and $K'$ are reducible. In particular there are distinct Dehn surgeries on $L$ producing reducible manifolds. The surgeries are distinct since $\mc{R}(0/1)$ is not equivalent to $\mc{R}(p/q)$. However this contradicts the fact that the Cabling Conjecture holds for strongly invertible knots \cite[Theorem 4]{EM}.\qed(Claim)

For a pair $(n,m) \in \Z \times \Z$, let $S(n,m) \subset \Z \times  \Z$ be a subset with the property that for all $(a,b) \in S(n,m)$, the graph $\Gamma(a,b)$ is isotopic to the graph $\Gamma(n,m)$ and which has the property that for all pairs $(a,b), (a',b') \in S(n,m)$ if $a + b = a' + b'$, then $(a,b) = (a',b')$. Observe that $(n,m) \in S(n,m)$. We will show that for all $(n,m) \in \Z \times \Z$, the set $S(n,m)$ has at most three elements. 

Suppose, for a contradiction, that there exists $(n,m)$ such that $S(n,m)$ has at least 4 distinct elements \[(a_1, b_1), (a_2, b_2), (a_3, b_3), (n,m).\] Each isotopy between any two of the graphs $\{\Gamma(a_1, b_1), \Gamma(a_2, b_2), \Gamma(a_3, b_3), \Gamma(n,m)\}$ induces a permutation of the set $\{B,R,V\}$ of blue, red, and verdant edges. For each $i \in \{1,2,3\}$, choose an isotopy $f_i$ from $\Gamma(n, m)$ to $\Gamma(a_i, b_i)$ and let $\sigma_i$ be the induced permutation of $\{B,R,V\}$. By the claim and the definition of $S(n,m)$, no $\sigma_i$ fixes $R$ and, whenever $i \neq j$, the permutation $\sigma_i \sigma_j^{-1}$ also does not fix $R$. Hence, $\sigma_i \neq \sigma_j$ if $i \neq j$. In the permutations of the set $\{B,R,V\}$, there are exactly four that do not fix $R$ and of those, two are transpositions. Thus, without loss of generality, we may assume that $\sigma_1$ is a transposition. 

Suppose that $\sigma_1$ is the transposition $(B,R,V) \to (B, V, R)$. Since neither $\sigma_2\sigma_1^{-1}$ nor $\sigma_3\sigma_1^{-1}$ fixes $R$ and since $\sigma_2 \neq \sigma_3$, the permutations $\sigma_2$ and $\sigma_3$ are the two permutations taking $R$ to $B$. But then the composition $\sigma_2\sigma_3^{-1}$ takes $R$ to $R$, a contradiction.  The case when $\sigma_1$ is the transposition $(B,R,V) \to (R,B,V)$ similarly gives rise to a contradiction. Thus, for every $(n,m) \in \Z \times \Z$, the set $S(n,m)$ has at most three elements (including $(n,m)$.)

Define a sequence $(n_i)$ in $\Z$ recursively. Let $n_1 \in \Z$ and recall that, by the above, $\Gamma(n_1,0)$ is a Brunnian theta graph.  Assume we have defined $n_1, \hdots, n_i$ so that the graphs $\Gamma(n_j,0)$ for $1 \leq j \leq i$ are pairwise non-isotopic Brunnian theta graphs. Let $P \subset \Z$ be such that $n \in P$ if and only if $\Gamma(n,0)$ is isotopic to $\Gamma(n_j,0)$ for some $1 \leq j \leq i$. Since for each $j$ with $1 \leq j \leq i$ there are at most 3 elements $n$ of $\Z$ such that $\Gamma(n,0)$ is isotopic to $\Gamma(n_j, 0)$, the set $P$ is finite. Hence, we may choose $n_{i+1} \in \Z\setminus P$. Thus, we may construct a sequence $(n_i)$ in $\Z$ so that the graphs $\Gamma(n_i,0)$ are pairwise disjoint Brunnian theta graphs. \qed

\begin{center}
\begin{figure}[ht]
\includegraphics[scale = .25]{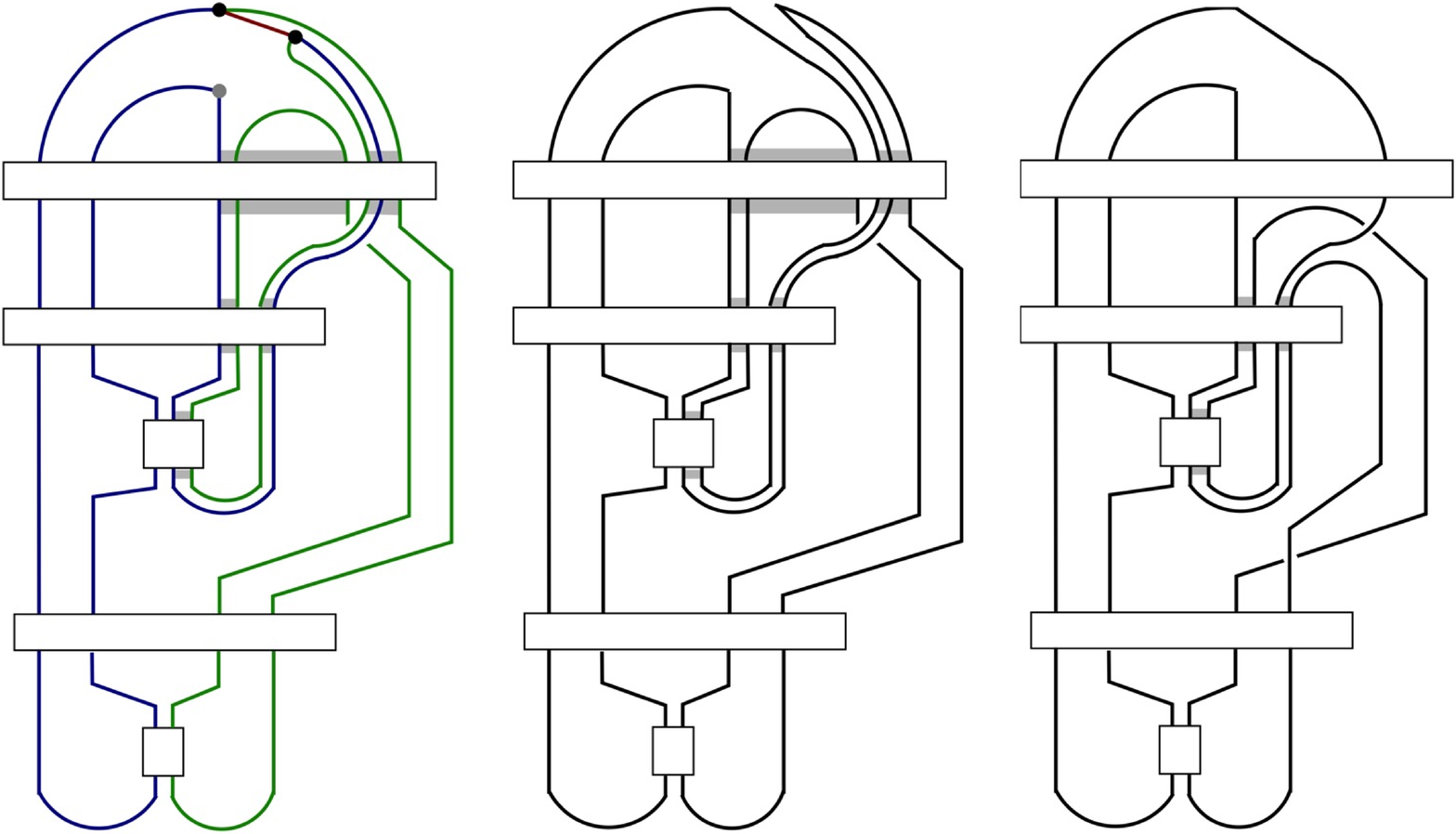}
\caption{On the left we have isotoped the template so that $R$ has no crossings. The shaded boxes denote doubled strands. In the middle we have unzipped along $R$ using a particular choice of unzipping disc. On the right, we have simplified the unzipped knot by using the parallel strands from the top braid box.}
\label{Fig:UnzippingR}
\end{figure}
\end{center}

\begin{center}
\begin{figure}[ht]
\labellist \small\hair 2pt
\pinlabel ${\Large \#}$ at 199 435
\pinlabel $-n$ at 127 1632
\pinlabel $n$ at 139 1216
\pinlabel $-n$ at 139 1164
\pinlabel $n$ at 129 909
\pinlabel $m$ at 129 876 
\endlabellist
\includegraphics[scale = .31]{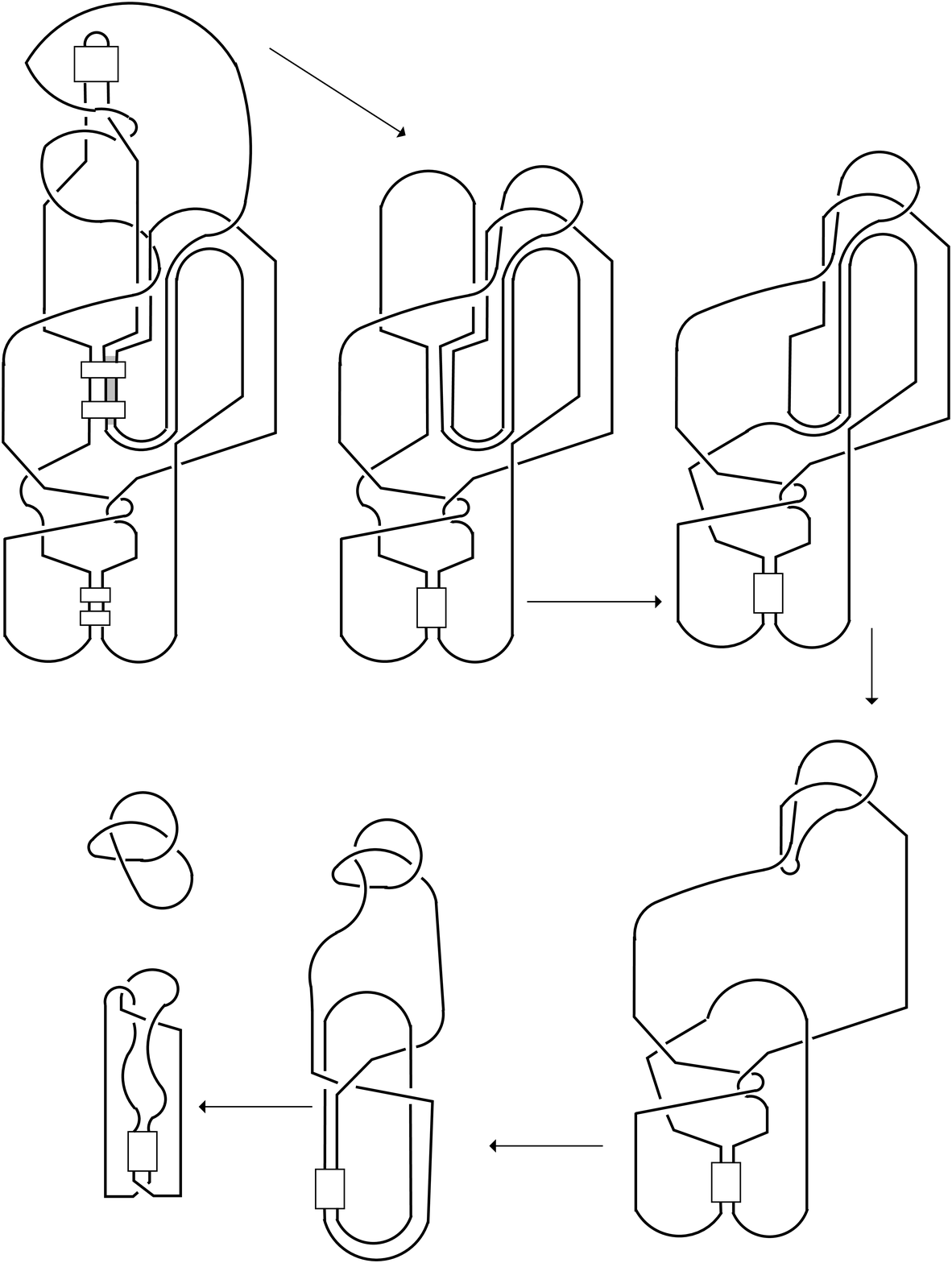}
\caption{The isotopies showing that $K$ is the connected sum of a right handed trefoil and $\mc{K}(-\frac{3}{6(m+n)+5}) $. In the first step we combine the lower two twist boxes into a single twist box with $m + n$ full twists.}
\label{Fig:SimplifyingK1}
\end{figure}
\end{center}

\section{Questions and Conjectures}\label{Questions}
Using the software \cite{Orb}, and a lot of patience, it is possible to compute (approximations to) hyperbolic volumes for some of the graphs $G(A,t_1,t_2)$. Our explorations suggest that ``most'' of the braids $A \in PB(4)$ produce hyperbolic Brunnian theta graphs for all $t_1, t_2 \in \Z$. Indeed, the software suggests that for a ``sufficiently complicated'' braid $A \in PB(4)$, and for fixed $t_1, t_2$ the volume of the exterior of $G(A^n,t_1,t_2)$ grows linearly in $n$. This is to be contrasted with the belief, based on the Thurston $2\pi$-theorem, that for a fixed $A$ and $t_1$, the volumes of $G(A,t_1, t_2)$ will converge as $t_2 \to \infty$. Furthermore, calculations of hyperbolic volumes using \cite{Orb} indicate that the graphs $\Gamma(n,m)$ of Theorem \ref{Main Theorem} are likely not Kinoshita-Wolcott graphs. Since the calculations of hyperbolic volume are only approximate and since we can only calculate the volumes of finitely many of the graphs, we do not have a proof of that fact. 

These investigations raise the the following questions.
\begin{enumerate}\spacing
\item For what braids $A \in \ker \phi$ is $G(A,0,0)$ a Brunnian theta graph?
\item Can Litherland's Alexander polynomial (or some other algebraic invariant) prove that there are infinitely many braids $A \in \ker \phi$ such that $G(A,t_1,t_2)$ is a Brunnian theta graph for some $t_1, t_2 \in \Z$?
\item Is any one of the Brunnian graphs $\Gamma(n,m)$ a Kinoshita-Wolcott graph?
\item Are there infinitely many braids $A$ such that the graph $G(A,0,0)$ is a Brunnian theta graph which is not a Kinoshita-Wolcott graph? We conjecture the answer to be ``yes''.
\item For what $A \in \ker \phi$ and $t_1,t_2 \in \Z$ is $G(A,t_1,t_2)$ a \emph{hyperbolic} Brunnian theta graph? We conjecture that whenever $G(A,t_1,t_2)$ is a Brunnian theta graph, then it is hyperbolic.
\item Is it true that if $G(A,t_1,t_2)$ is hyperbolic then $G(A^n, t_1,t_2)$ is hyperbolic for all $n \in \N$? Does the hyperbolic volume of the exterior of $G(A^n, t_1, t_2)$ grow linearly in $n$?
\end{enumerate}

\begin{bibdiv} \begin{biblist}\spacing
\bibselect{NewExamples-Bib}
\end{biblist} \end{bibdiv}

\end{document}